\newcommand{\pr}[1][]{\mathbb{P}}
\newcommand{\sm}[0]{\setminus}
\def\F{{\mathcal F}}
\def\E{{\mathbb E}}
\def\P{{\mathbb P}}
\def\G{{\mathbb G}}
\def\C{{\mathcal C}}
\def\HH{{\mathcal H}}
\def\Hb{{\mathbb H}}
\def\Tr{{\mathrm Tr}}
\def\P{{\mathbb P}}
\newtheorem{thm}{Theorem}[section]
\newtheorem{lemma}[thm]{Lemma}
\newtheorem{prop}[thm]{Proposition}
\newcommand{\red}[1]{\textcolor{red}{#1}}
\newcommand{\Mod}[1]{\ \mathrm{mod}\ #1}
\newenvironment{subproof}[1][\proofname]{%
  \begin{proof}[#1]%
}{%
  \end{proof}%
}
\title{Upper tail bounds for cycles}
\author{Abigail Raz\thanks{Department of Mathematics, Rutgers University, Piscataway NJ. Email: ajr224@math.rutgers.edu}}
\date{}
\begin{document}
\maketitle
\begin{abstract}
This paper examines bounds on upper tails for cycle counts in $G_{n,p}$. For a fixed graph $H$ define $\xi_H= \xi_H^{n,p}$ to be the number of copies of $H$ in $G_{n,p}$. It is a much studied and surprisingly difficult problem to understand the \emph{upper tail} of the distribution of $\xi_H$, for example, to estimate
\begin{equation*}
    \pr(\xi_H > 2 \E\xi_H).
\end{equation*}
The best known result for general $H$ and $p$ is due to Janson, Oleszkiewicz, and Ruci{\'n}ski, who, in 2004, proved
\begin{align}\label{a:JOR}
\exp[-O_{H, \eta}(M_H(n,p) \ln(1/p))]&<\pr(\xi_H > (1+\eta)\E \xi_H)\\&<\exp[-\Omega_{H, \eta}(M_{H}(n,p))].\nonumber
\end{align}
Thus they determined the upper tail up to a factor of $\ln(1/p)$ in the exponent. There has since been substantial work to improve these bounds for particular $H$ and $p$. We close the $\ln(1/p)$ gap for cycles, up to a constant in the exponent. Here the lower bound in (\ref{a:JOR}) is the truth for $l$-cycles when $p> \frac{\ln^{1/(l-2)}n}{n}$. 
\end{abstract}

\section{Introduction}
Let $\G = G(m,p)$ be the usual (Erd\H{o}s-R{\'e}nyi) random graph. A \emph{copy} of $H$ in $\G$ is a subgraph of $\G$ isomorphic to $H$. It is a much-studied question to estimate, for $\eta>0$ and $\xi_H= \xi_H^{m,p}$ the number of copies of $H$ in $G_{m,p}$,
\begin{equation}\label{mainprob}
    \pr(\xi_H > (1+\eta)\E \xi_H).
\end{equation}
To avoid irrelevancies we will always assume $p \ge m^{-1/m_H}$, where (see \cite[pg. 56]{JLR})
\begin{equation*}
    m_H = \max\{e_K/v_K: K \subseteq H, v_K>0\}.
\end{equation*}
(So in the case of cycles we assume $p \ge m^{-1}$.) Then $m^{-1/m_H}$ is a threshold for $``G \supseteq H"$ (see \cite[Theorem 3.4]{JLR}). For smaller $p$ (and bounded $\eta$) the quantity in (\ref{mainprob}) is $\Theta(\min\{m^{v_K}p^{e_K}: K \subseteq H, e_K >0\})$ (see \cite[Theorem 3.9]{JLR} for a start).

Investigation the distribution of $\xi_H$ began in 1960 with Erd{\H o}s and R{\'e}nyi \cite{ER}. In the case of triangles it is easy to see that the upper tail is lower bounded by $\exp[-O(n^2p^2\ln(1/p))]$ (since this is the probability that $G_{n,p}$ contains a complete graph on, say, $2np$ vertices). This is, usually, much bigger than the naive guess, $\exp[-\Omega(n^3p^3)]$, a first indication that the problem is hard. In fact, not much was known about the upper tail until 2000 when 
Vu proved the first exponential tail bound in \cite{Vurg}. 
More information on what was known prior to 2002 can be found in \cite{JR}. A breakthrough occurred in 2004 when, in \cite{KVrg}, Kim and Vu showed, using the ``polynomial concentration method" of \cite{KVpoly}, that when $H$ is a triangle and $p> \frac{\log m}{m}$,
\begin{equation*}
    \pr(\xi_H>(1+\eta)\E\xi_H)<\exp[-\Omega_{\eta}(m^2p^2)].
\end{equation*}

The Kim-Vu bound for triangles was vastly extended by Janson, 
Olesz\-kiewicz, and Ruci{\'n}ski in 2004. To state their result we require the following definition:
\begin{equation*}
    M_H(m,p)=\begin{cases}
    m^2p^{\Delta_H} & \text{ if }p\ge m^{-1/\Delta_H}\\
    \min_{K \subseteq H}\{m^{v_K}p^{e_K}\}^{1/\alpha^*_K} & \text{ if }m^{-1/m_H}\le p <m^{-1/\Delta_H}.
    \end{cases}
\end{equation*}
(As usual $\alpha^*$ is fractional independence number (see e.g.\ \cite{Boll}) and $\Delta_H$ is maximum degree.)
\begin{thm}\cite[Theorem 1.2]{JOR}\label{JOR}
For any $H$ and $\eta$,
\begin{align}
    \exp[-O_{H,\eta}(M_H(m,p)\ln(1/p))]&<\pr(\xi_H>(1+\eta)\E\xi_H) \nonumber \\&<\exp[-\Omega_{H,\eta}(M_H(m,p))].\label{JOReq}
\end{align}
\end{thm}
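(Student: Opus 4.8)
The plan is to prove the two bounds in \eqref{JOReq} separately, the lower bound by planting a cheap dense structure and the upper bound by a structural dichotomy; the exponent $M_H(m,p)$ --- and, in its sub-critical branch, the $1/\alpha^*_K$ --- reflects a fractional optimisation over subgraphs $K\subseteq H$ that will surface in both directions. (Implicitly $p$ is bounded away from $1$: for $p$ close to $1$ the left side of \eqref{JOReq} can equal $0$, since no $m$-vertex graph then has more than $(1+\eta)\E\xi_H$ copies of $H$.) For the lower bound I would exhibit a fixed $Q\subseteq K_m$ with $e(Q)=O_{H,\eta}(M_H(m,p))$ edges that by itself already contains more than $(1+\eta)\E\xi_H$ copies of $H$ (or forces that many once the random edges appear), so that
\[
\pr\left(\xi_H>(1+\eta)\E\xi_H\right)\ \ge\ \pr\left(G_{m,p}\supseteq Q\right)\ =\ p^{e(Q)}\ =\ \exp\left[-O_{H,\eta}\left(M_H(m,p)\ln(1/p)\right)\right].
\]
When $H$ is regular --- in particular for $l$-cycles, where a short computation using $\alpha^*(C_l)=l/2$ gives $M_{C_l}(m,p)=m^2p^2$ throughout $p\ge m^{-1}$ --- one may take $Q=K_u$ with $u=\lceil C_{l,\eta}\,mp\rceil$, which for a large enough constant $C_{l,\eta}$ contains $\Theta(u^l)>(1+\eta)\E\xi_{C_l}$ copies of $C_l$ while having $e(K_u)=\Theta(u^2)=O_{l,\eta}(m^2p^2)$ edges. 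For an irregular $H$ a pure clique wastes edges, and one plants only the ``expensive local part'' of $H$ in bulk --- for example, if some maximum-degree vertex of $H$ has an independent neighbourhood, a complete bipartite graph between $\approx mp^{\Delta_H}$ ``hub'' vertices and all of $[m]$, leaving $G_{m,p}$ to supply the remaining sparse edges of each copy --- and in general one optimises over which $K\subseteq H$ to treat this way, which is precisely the optimisation defining $M_H$; the sub-critical range is the same with $K$ the minimiser in the second branch.

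The upper bound is the substantial direction (and note it asks only for $\exp[-\Omega_{H,\eta}(M_H)]$, without the $\ln(1/p)$ of the lower bound --- closing that gap is harder, and is what the present paper does for cycles). It suffices to prove $\pr(\xi_H>(1+\eta)\E\xi_H)<\exp[-\Omega_{H,\eta}(M_H(m,p))]$, and I would treat the dense range $p\ge m^{-1/\Delta_H}$ first (there $M_H=m^2p^{\Delta_H}$), the sub-critical range being analogous but with the subgraph $K\subseteq H$ minimising $(m^{v_K}p^{e_K})^{1/\alpha^*_K}$ playing the role of a maximum-degree vertex. The heart is a dichotomy: on the event $\xi_H>(1+\eta)\E\xi_H$, the graph $G=G_{m,p}$ must either \emph{(i)}~contain a large family --- of size of order $M_H$ up to a bounded overlap factor --- of ``spread-out'' copies of $H$, pairwise sharing few edges, or \emph{(ii)}~exhibit a ``local density anomaly'': a bounded-size vertex set carrying far more edges than expected, equivalently a dense cluster of many copies of $H$ overlapping in a common piece (a high-degree vertex, a dense ``book'', and so on). I would prove this by greedy deletion in the ``copy hypergraph'' with ground set $E(G)$ and hyperedges the present copies of $H$: greedily pull out copies nearly edge-disjoint from those already taken; if the process runs long we are in case~\emph{(i)}, and if it halts early the deleted edges cover all present copies while being few, so pigeonhole forces some bounded subgraph pattern of $H$ to occur in $G$ far more often than expected, which is case~\emph{(ii)}. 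Exactly which patterns appear in case~\emph{(ii)}, and the size threshold in case~\emph{(i)}, come out of the LP/fractional relaxation over subgraphs $K\subseteq H$, organised (if one likes) as an induction on $H$; this is what produces the exponent $M_H$.

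It then remains to bound each alternative by $\exp[-\Omega_{H,\eta}(M_H)]$. Case~\emph{(ii)} is handled by Chernoff / first-moment bounds: the probability that some bounded vertex set has a given edge surplus, or that some pair has too many common neighbours, and so on, is of the form $\exp[-\Omega(M_H)]$ precisely by the choice of the surplus thresholds and the assumption that $p$ is above the relevant threshold. Case~\emph{(i)} is near-independence: a fixed spread-out family of $t$ copies is present with probability essentially $p^{d}$, where $d$ is the number of distinct edges it uses; summing over such families of size $t\asymp M_H$ --- counted carefully, using the spread-out structure to beat the crude bound $\binom{N}{t}$ with $N=\Theta(m^{v_H})$ --- gives $\exp[-\Omega(t)]=\exp[-\Omega(M_H)]$. (In the dense range one may alternatively invoke the polynomial concentration inequality of Kim and Vu, which is how the triangle case was first settled.) I expect the dichotomy with the correct quantitative thresholds to be the main obstacle: a cruder dichotomy loses a factor $\ln(1/p)$ or a power of $m$ in the exponent, and recovering exactly $M_H$ requires both the right fractional relaxation over subgraphs of $H$ and a tight accounting of how two copies of $H$ can overlap --- relatedly, the count in case~\emph{(i)} must be sharp enough that $t$ need only be of order $M_H$, not of order $\E\xi_H$, which is the delicate point.
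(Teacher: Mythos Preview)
This theorem is not proved in the paper: it is quoted as Theorem~1.2 of \cite{JOR} and used only as a black box (notably in Proposition~\ref{JORimp}, to dispose of the range $p>e^{-20l^2}$). There is therefore no proof here to compare your proposal against.

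On your sketch itself: the lower bound via planting is correct and is essentially how \cite{JOR} argues it. Your upper-bound plan, however, does not match theirs. Janson, Oleszkiewicz and Ruci\'nski do not run a spread-out/clustered dichotomy or greedy deletion in the copy hypergraph; they prove a high-moment bound, controlling $\E[\xi_H^k]$ for $k$ of order $M_H^*(m,p)$ by a careful count of ordered $k$-tuples of copies of $H$ according to their overlap pattern, and then apply Markov's inequality. The quantity $M_H^*$ (equivalent to $M_H$ up to constants, as noted after the statement) is exactly what falls out of optimising that moment computation over subgraphs $K\subseteq H$. Your dichotomy is closer in spirit to later combinatorial arguments (DeMarco--Kahn for cliques, or indeed the present paper for cycles), and you are right that making the case~(i) count sharp enough that $t\asymp M_H$ suffices is the delicate step; but as written it is a plan rather than a proof, and in any case it is not the route taken in \cite{JOR}.
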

\noindent (Note, $M_H(m,p)$ is not quite the quantity $M^*_H(m,p)$ used in \cite{JOR}, but as shown in their Theorem 1.5, the two quantities are equivalent up to a constant factor; so the difference is irrelevant here.) 

Thus they determined the probability in (\ref{mainprob}) up to a factor of $O(\ln(1/p))$ in the exponent for constant $\eta>0$. This remains the best result for general $H$ and $p$.
The first progress towards closing the $\ln(1/p)$ gap was made by Chatterjee in \cite{Chat} and DeMarco and Kahn in \cite{DK1} who independently closed it for triangles, showing that, for $p>\log m/m$, the lower bound is the truth (up to the constant in the exponent). DeMarco and Kahn also gave the order of the exponent for smaller $p>1/m$ where the lower bound in (\ref{JOReq}) (namely $\exp[-\Omega(n^2p^2 \ln n)]$) is no longer the answer. Later, in \cite{DK2}, DeMarco and Kahn closed the gap for $l$-cliques, showing that (for $p\ge m^{-2/(l-1)}$, $\eta>0$, and $l>1$)
\begin{equation*}
    \pr(\xi_{K_l}> (1+\eta)\E\xi_{K_l})<\exp[-\Omega_{l,\eta}(\min\{m^2p^{l-1}\log(1/p),m^lp^{l \choose 2}\})].
\end{equation*}
When $H$ is a ``strictly balanced" graph and $p$ is small ($p\le m^{-v/e}\log^{C_h}m$). Warnke, in \cite{War}, used a combinatorial sparsification idea based on the BK inequality \cite{BK,BKR} to close the $\ln(1/p)$ gap, improving on work in \cite{Vurg,Sil}. There was a breakthrough in 2016 when Chatterjee and Dembo introduced a ``nonlinear large deviation" framework \cite{CD}. This has been used to close the gap for general $H$ and large $p$ (i.e. $p>m^{-\alpha_H}$) \cite{CD,LZ}. Recently this technique was used, in \cite{CookD}, by Cook and Dembo to close the gap --- including determining the correct constant in the exponent --- for cycles when $p\gg m^{-1/2}$ (among other results). Additionally, outside of the large deviation framework, Warnke and \v{S}ileikis, in \cite{SW}, recently determined the correct upper tail bound for stars (including in the case where $\eta \ge n^{-\alpha}$ rather than a constant).

Here we settle the question for cycles (i.e.\ the order of magnitude of the exponent), where, with the $l$-cycle denoted $C_l$,
\begin{equation*}
    M_{C_l}(m,p)=m^2p^2.
\end{equation*}
Formally, letting $\xi_l=\xi_l(\G)$ be the number of copies of $C_l$ in $\G$ we prove:

\begin{thm} \label{mainthm}
For any fixed $l$, $\eta>0$, and $p\in [0,1]$,
$$\pr(\xi_l > (1+ \eta) \E \xi_l) < \exp[-\Omega_{\eta,l}( \min \{m^2p^2\ln(1/p),m^lp^l\})].$$
\end{thm}
We are most interested in the range where $m^2p^2\ln(1/p)<m^lp^l$, so essentially when $p> \frac{\ln^{1/(l-2)}m}{m}$.
As in \cite{DK1}, it is convenient to work with an $l$-partite version of the random graph. Let $\Hb$ be the random $l$-partite graph on $ln$ vertices where the vertex set is the disjoint union of $l$ $n$-sets, say $V=V(\Hb)=V_1\cup \cdots \cup V_l$, and $\pr(xy \in E(\Hb))=p$ whenever $x \in V_i$ and $y \in V_{i+1}$ for some $i$ (all subscripts$\Mod{l}$), these choices made independently. There are no edges between other pairs $(V_i,V_j)$ or within a $V_i$. We always take $v_i$ to be a vertex of $V_i$. A \emph{copy} of $C_l$ in $\Hb$ is any subgraph, with vertices $v_1, v_2, \ldots, v_l$ isomorphic to $C_l$. Note these are not all of the subgraphs of $H$ isomorphic to $C_l$ since we demand each vertex of the cycle is in a different $V_i$. We denote the number of copies of $C_l$ in $\Hb$ by $\xi_l'$. A \emph{copy} of the $l-1$ path (denoted $P_{l-1}$) is any path $v_1, v_2, \ldots, v_l$ isomorphic to $P_{l-1}$ (i.e. $v_i \sim v_{i+1}$ for $1\le i <l$). We use $(v_1, \ldots, v_l)$ to denote both copies of $C_l$ and copies of $P_{l-1}$, since it will always be clear which interpretation is intended. We show the following bound.  
\begin{thm}\label{realthm}
For any fixed $l$, $\delta>0$, and $p\in [0,1]$, 
\begin{equation}\label{realeq}
    \pr(\xi'_l > (1+ \delta) n^lp^l) < \exp[-\Omega_{\delta,l}( \min \{n^2p^2\ln(1/p),n^lp^l\})].
\end{equation}
\end{thm}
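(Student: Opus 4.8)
The plan is to follow the DeMarco–Kahn strategy for triangles, adapted to $l$-cycles in the $l$-partite model $\Hb$. The basic dichotomy is that a large deviation $\xi_l' > (1+\delta)n^lp^l$ must be ``caused'' either by an atypically dense but small piece of $\Hb$ (roughly a clique-like concentration, contributing many cycles through a bounded number of high-degree vertices/edges) or by a more global, spread-out excess. The first mechanism is exactly what produces the $\exp[-\Omega(n^2p^2\ln(1/p))]$ lower bound, and the second one we want to rule out at cost $\exp[-\Omega(n^2p^2\ln(1/p))]$ as well. Concretely, I would fix a large constant $K=K(\delta,l)$ and call a vertex $v_i\in V_i$ \emph{heavy} if it lies in more than $Kn^{l-1}p^l$ copies of $C_l$ (equivalently, an edge/vertex is heavy if the number of copies through it exceeds a constant multiple of its expectation). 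The event $\{\xi_l'>(1+\delta)n^lp^l\}$ is then covered by: (a) the ``spread'' event that the cycles avoiding all heavy structure already number more than $(1+\delta/2)n^lp^l$; and (b) the event that heavy structure alone accounts for $\gtrsim \delta n^lp^l$ cycles. For (b), since each heavy vertex can lie in at most $n^{l-1}$ cycles, there must be $\gtrsim \delta n p$ heavy vertices, or a heavy vertex must lie in $\gg n^{l-1}p^l$ cycles, and in either case one extracts a ``bad'' sub-configuration — essentially a set of $\Theta(np)$ vertices spanning $\Theta(n^2p^2)$ more edges than expected between two consecutive parts — whose probability is $\exp[-\Omega(n^2p^2\ln(1/p))]$ by a direct union bound over the (few) choices of the configuration and a Chernoff estimate on the edge count.

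The heart of the argument is bounding the spread event (a). Here I would work part by part: write $\xi_l'$ as a sum over paths $P_{l-1}$ of the indicator that the two endpoints are joined (going around the cycle), and peel off one bipartite layer at a time. The key quantitative inputs are standard concentration facts, used in the following shape. First, for a \emph{fixed} graph $\Hb$ on the first $l-1$ parts, conditioning only on the last layer $V_{l-1}$–$V_l$–$V_1$ edges, the count $\xi_l'$ is a sum of independent-ish contributions and concentrates around its conditional mean unless some vertex of $V_1$ or $V_l$ has abnormally large degree or lies in abnormally many copies of $P_{l-2}$; ruling out the latter is an inductive instance of the same problem one dimension down (the path/cycle count for $l-1$). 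Second, one needs that, with the bad/heavy structure removed, the number of copies of $P_{l-1}$ joining a typical pair $(v_1,v_l)$ is close to $n^{l-2}p^{l-2}$ for \emph{most} pairs, so that the final layer of $p$-edges sees roughly the right ``load''; this is a second-moment / concentration statement for the path count, again amenable to induction on $l$ together with a Chernoff bound on codegrees between consecutive parts. Assembling these, the spread event forces an atypical event in one of the $l$ bipartite layers — too many edges between some $V_i,V_{i+1}$, or a vertex of too high degree — and each such event has probability $\exp[-\Omega(n^2p^2\ln(1/p))]$ (for the global edge surplus) or $\exp[-\Omega(np\ln(1/p))]$ (for a single high-degree vertex, and there are only $O(n)$ of them), both of which are dominated by the target bound.

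I expect the main obstacle to be making the layer-by-layer peeling and the induction on $l$ interlock cleanly: the statement one proves by induction cannot be just the cycle tail bound, but a more robust package that simultaneously controls (i) the cycle count, (ii) the path count for most endpoint pairs, and (iii) the ``no vertex is in too many copies'' regularity condition, since each of these is needed as a hypothesis to run the inductive step for the others. There is also bookkeeping in tracking how the constants $K$, $\delta$, and the thresholds defining ``heavy'' degrade through the $l$ iterations — one must choose them so that at every stage the slack stays a fixed fraction of $\delta$ and the number of union-bound choices stays $\exp[o(n^2p^2\ln(1/p))]$. A secondary subtlety is the small-$p$ regime $p$ close to $1/m$, where $n^lp^l$ rather than $n^2p^2\ln(1/p)$ is the binding term in \eqref{realeq}: there the cycle count is itself small (a bounded number of cycles in expectation once $l$ is fixed and $p\sim 1/n$), and the bound follows from a crude first-moment / Janson-type estimate on the probability of seeing $\gtrsim \delta n^lp^l$ cycles at all, so this case should be handled separately up front before invoking the machinery above. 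Finally, transferring from the $l$-partite $\xi_l'$ back to $\xi_l$ in $G_{m,p}$ (Theorem \ref{mainthm}) is routine: a copy of $C_l$ in $G_{m,p}$ with a fixed cyclic vertex ordering embeds into one of boundedly many $l$-partitions, so a union bound over partitions loses only a constant factor in the exponent.
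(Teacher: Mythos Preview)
Your dichotomy is in the right spirit, but the implementation differs from the paper's in ways that matter. The paper's ``heavy'' is purely a degree threshold ($\hat d(v_i)\ge np^{1-\gamma}$ for a small fixed $\gamma=\gamma(l)$), not a cycle-membership count; this makes your case (b) a direct combinatorial count, classifying cycles by which indices carry a high-degree vertex and bounding each class via two elementary lemmas controlling $|\{v:\hat d(v)\ge Knp\}|$ and $\sum\{\hat d(v):\hat d(v)>Knp\}$. There is no induction on $l$ anywhere.

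The real gap is in the ``spread'' event. After removing high-degree vertices, the weights $f(v_1,v_l)$ (number of full paths joining $v_1,v_l$) can still greatly exceed their typical value $n^{l-2}p^{l-2}$ for some pairs, and the weighted-Bernoulli concentration you invoke needs a \emph{uniform} bound $z\lesssim n^{l-2}p^{l-2}/s$ on the weights to produce $\exp[-\Omega(n^2p^2s)]$; knowing the weights are right ``for most pairs'' is not enough. The paper therefore introduces a second notion of \emph{heavy path} (with two different definitions according as $p$ is above or below roughly $n^{-1+1/(5l)}$), separates off cycles whose underlying path is heavy, and shows these number at most $(\delta/4)n^lp^l$ w.l.p. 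This heavy-path analysis is the most technical part of the proof: for larger $p$ one shows a heavy endpoint $v_1$ must have large codegree with some $v_3$, bounds the number of such endpoints, and finishes with Shearer's entropy lemma; for smaller $p$ a local neighborhood criterion replaces the codegree condition, again finishing via Shearer. Your proposal acknowledges this step only as a second-moment statement deferred to an unspecified inductive package, but it is exactly here that the argument is delicate, and without it the final concentration step does not close.
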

That Theorem \ref{realthm} implies Theorem \ref{mainthm} is likely well known and an easy generalization from the $l=3$ case which can be found in \cite{DK1}. However, for completeness we will still give the general argument.
\begin{prop}\label{imp}
Theorem \ref{realthm} implies Theorem \ref{mainthm}.
\end{prop}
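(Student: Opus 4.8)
The plan is to pass from the genuine random graph $\G = G(m,p)$ to the $l$-partite model $\Hb$ by a random partition argument, in the spirit of the $l=3$ case in \cite{DK1}. First I would take $m$ large and set $n = \lfloor m/l \rfloor$ (so $\ln = \Theta(m)$). Given $\G$, choose a uniformly random ordered partition of a subset of $V(\G)$ into blocks $V_1,\dots,V_l$ of size $n$ each (equivalently, assign each vertex an i.i.d.\ uniform label in $\{1,\dots,l\}$ and then trim each class down to size $n$; one checks the trimming only helps, or alternatively condition on the high-probability event that all classes have size $\ge n$ and discard the excess). Let $\Hb$ be the $l$-partite graph obtained by keeping only the edges of $\G$ running between consecutive classes $V_i, V_{i+1}$ (indices mod $l$). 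Conditioned on the partition, each retained pair is present independently with probability $p$, so $\Hb$ has exactly the distribution in Theorem \ref{realthm} with parameter $n$.

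The key point is a \emph{sprinkling / counting} comparison: a fixed copy of $C_l$ in $\G$, say on vertices $u_1,\dots,u_l$ (in cyclic order), survives as a copy of $C_l'$ in $\Hb$ precisely when the partition assigns $u_1,\dots,u_l$ to classes $V_1,\dots,V_l$ in the correct cyclic rotation; there are $2l$ such rotations/reflections out of $l^l$ labelings (a bit more care with the trimming, but the probability is $\Theta_l(1)$, bounded below by some $c_l>0$). Hence $\E[\xi_l' \mid \G] \ge c_l\, \xi_l(\G) - (\text{lower-order corrections from trimming})$. Since $\E\xi_l(\G) = \Theta_l(m^l p^l)$ and $\E\xi_l' = \Theta_l(n^l p^l) = \Theta_l(m^l p^l)$, the event $\{\xi_l(\G) > (1+\eta)\E\xi_l(\G)\}$ forces, with probability bounded away from $0$ over the choice of the partition, the event $\{\xi_l' > (1+\delta)\E\xi_l'\}$ for a suitable $\delta = \delta(\eta,l) > 0$. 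I would make this precise via a second-moment (or Paley--Zygmund / simple averaging) argument: conditioned on $\xi_l(\G)$ being large, $\xi_l'$ concentrates enough around its conditional mean that it exceeds $(1+\delta)\E\xi_l'$ with probability at least some $q_l > 0$ independent of $m$ and $p$. Concretely,
\[
\pr\big(\xi_l' > (1+\delta) n^l p^l\big) \;\ge\; q_l \cdot \pr\big(\xi_l(\G) > (1+\eta)\E\xi_l(\G)\big),
\]
and then Theorem \ref{realthm} applied to $\Hb$ (with this $\delta$, and with $n = \Theta_l(m)$ so that $\min\{n^2p^2\ln(1/p), n^lp^l\} = \Theta_l(\min\{m^2p^2\ln(1/p), m^lp^l\})$) gives the bound in Theorem \ref{mainthm}, absorbing the constant $q_l$ into the $\Omega_{\eta,l}(\cdot)$.

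The main obstacle is controlling the conditional fluctuations of $\xi_l'$ given $\G$ well enough to conclude the "with probability $\ge q_l$" step uniformly across the whole range of $p$ — in particular when $p$ is close to the lower end $1/m$, where $\E\xi_l'$ is only a constant and Chebyshev-type bounds are delicate. One clean way around this is to note that we only need a \emph{one-sided} statement and may freely restrict $\G$ to a sub-event of probability $\Omega_{\eta}(1) \cdot \pr(\xi_l(\G) > (1+\eta)\E\xi_l(\G))$ on which $\xi_l(\G)$ is, say, in a dyadic window $[(1+\eta)\E\xi_l(\G), 2(1+\eta)\E\xi_l(\G)]$ or is at least as large as some convenient threshold; then a direct first-moment bound on the number of copies \emph{destroyed} by the partition (each destroyed with probability $\le 1 - c_l$, and the total number destroyed has expectation $\le (1-c_l)\xi_l(\G)$) combined with Markov gives that $\xi_l' \ge c_l' \xi_l(\G)$ with probability bounded below, which suffices. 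Handling the boundary cases $p$ extremely small (where $\pr(\xi_l(\G) > (1+\eta)\E\xi_l(\G))$ is itself only polynomially small and the claimed exponent $\Omega(\min\{\cdot\})$ is $O(1)$) is trivial and can be dispatched separately. The rest is bookkeeping: verifying $\E\xi_l(\G) = (1+o(1))\tfrac{m^l}{2l}p^l$, $\E\xi_l' = n^l p^l$, and that the trimming of the classes changes all counts by a negligible multiplicative factor with probability $1 - e^{-\Omega(m)}$.
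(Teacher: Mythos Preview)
Your approach is essentially the paper's, but you are over-engineering it. Two simplifications remove all the obstacles you flag. First, the paper reduces at the outset to $m = ln$ (embedding $G_{m,p}$ into $G_{(m+k),p}$ for the appropriate $k<l$), so no trimming or class-size conditioning is needed. Second, and this is the point you are missing, once $m=ln$ one has \emph{exactly} $\E[\xi' \mid \G = G] = \rho\,\xi(G)$ with $\rho = n^l/\binom{ln}{l}$, while trivially and deterministically $\xi' \le \xi(G)$; combining these two facts via a one-line averaging gives
\[
\pr\bigl(\xi' \ge (1-\delta)\rho\,\xi(G) \,\bigm|\, \G = G\bigr) \;\ge\; \frac{\delta\rho}{1-\rho+\delta\rho}\;=:\;\beta,
\]
a constant depending only on $l$ and $\delta$. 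This holds uniformly in $p$ and in the value of $\xi(G)$, so your concerns about the small-$p$ regime, second moments, Paley--Zygmund, and restricting to a dyadic window all evaporate. Choosing $\delta = \eta/(2+\eta)$ so that $(1+\delta)/(1-\delta)=1+\eta$ then gives $\pr(\xi_l' > (1+\delta)n^lp^l) \ge \beta\,\pr(\xi_l > (1+\eta)\E\xi_l)$ directly.
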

This is proved in Section \ref{reduction}. The rest of the paper is organized as follows. Section \ref{main} gives notation and states the two main assertions that give Theorem \ref{realthm}. These are proved in Sections \ref{pf1}-\ref{pf2}, with Section \ref{prelim} devoted to preliminaries.

\section{Reduction}\label{reduction}

For completeness we give the proof of Proposition \ref{imp}, following \cite{DK1}.

\begin{proof}[Proof of Proposition \ref{imp}]
We first claim that it is enough to prove Proposition \ref{imp} for $m=ln$. Assuming we know Proposition \ref{imp} for $m=ln$ we show it still holds when $m = -k \mod l$. Given $\eta$ and $l$, we may assume $m$ is large (formally $m>m_{\eta,l}$). So, for example,
$$(1+\eta){m \choose l}> (1+\eta/2){m+k \choose l}.$$
Therefore,
\begin{align*}
    \pr\left(\xi_l > (1+\eta){m \choose l}p^l\right)&\le \pr\left(\xi_l >(1+\eta/2){m+k \choose l}p^l\right)\\
    &<\exp[-\Omega_{\eta/2, l}( \min\{(m+k)^2p^2 \ln(1/p), (m+k)^lp^l\})]\\
    &=\exp[-\Omega_{\eta,l}(\min\{m^2p^2\ln(1/p), m^lp^l\})].
\end{align*}
Note the second inequality holds since $m+k$ is a multiple of $l$. \\Now to prove Proposition \ref{imp} when $m=ln$ let $\eta$ be as in Theorem \ref{mainthm}, and set $\delta= \frac{\eta}{2+\eta}$. We can choose $\Hb$ by first choosing $\G$ on $V= [ln]$ and then selecting a uniform equipartition $V_1 \cup \cdots \cup V_l$, and setting $$E(\Hb)= \{xy \in E(\G): x,y \text{ belong to consecutive }V_i's\}.$$ Note that, for any possible value $G$ of $\G$
\begin{equation}\label{lpart1}
    \E[\xi' | \G=G]= \rho \xi(G),
\end{equation}
where $\rho = n^l/{ln \choose l}$. On the other hand, letting $$\alpha(G)= \pr(\xi' < (1-\delta)\rho \xi(G)|\G=G),$$ we have
\begin{equation}\label{lpart2}
    \E[\xi'|\G=G]\le \alpha(G)(1-\delta)\rho \xi(G)+(1-\alpha(G))\xi(G).
\end{equation}
Combining (\ref{lpart1}) and (\ref{lpart2}) gives $\alpha(G)\le 1- \frac{\delta \rho}{1-\rho +\delta \rho}\coloneqq 1-\beta$. We also have, by Theorem \ref{realthm}, 
\begin{equation*}
    \exp[-\Omega_{\delta,l} (\min\{n^2p^2\ln(1/p), n^lp^l\})] > \pr(\xi_l' >(1+\delta)n^lp^l).
\end{equation*}
Additionally, we know
\begin{align*}
    &\pr(\xi_l' >(1+\delta)n^lp^l)\\
    &\ge \pr\left(\xi'_l> (1+\delta)n^lp^l | \xi_l> \frac{1+\delta}{1-\delta}{ln \choose l}p^l\right)\pr\left(\xi_l> \frac{1+\delta}{1-\delta}{ln \choose l}p^l\right)\\
    & \ge \beta \pr\left(\xi_l > \frac{1+\delta}{1-\delta}{ln \choose l}p^l\right).
\end{align*}
Here the final inequality holds since $(1-\delta)\rho\frac{1+\delta}{1-\delta}{ln \choose l}p^l=(1+\delta)n^lp^l$ and, as we showed, $\alpha(G)$ is always at most $(1-\beta)$. Since $\frac{1+\delta}{1-\delta}= 1+\eta$, Theorem \ref{mainthm} follows.
\end{proof}


\section{Main Lemmas}\label{main}
Recall that we always take $v_i$ to be a vertex in $V_i$; indices are always written $\Mod l$; and \emph{copy} of $C_l$, \emph{copy} of $P_{l-1}$ were defined just before the statement of Theorem \ref{realthm}.  
We use $\C$ to denote the set of copies of $C_l$ in $\Hb$. Additionally, we abusively use just \emph{cycle} for ``copy of $C_l$'' and \emph{full path} for ``copy of $P_{l-1}$''. As usual $N_{Y}(x)= \{y \in Y : xy \in E(\Hb)\}$, $d_Y(x)= |N_Y(x)|$, $d(x,y) = |N_V(x)\cap N_V(y)|$, and $\Delta$ is the maximum degree in $\Hb$ (we also use $N(x)= N_V(x)$ and $d(v)= d_V(x)$). Let 
\begin{equation*}
  \hat{d}(v_i)= \max\{d_{V_{i-1}}(v_i),d_{V_{i+1}}(v_i)\}.  
\end{equation*}
We will abusively refer to $\hat{d}(v)$ as the \emph{degree} of $v$.
For disjoint $X,Y \subseteq V$ we use $\nabla(X)$ (resp. $\nabla(X,Y)$) for the set of edges with one end in $X$ (resp. one end in each of $X,Y$). 

Much of the set-up that follows is borrowed from or inspired by \cite{DK1}. Set $t= \ln (1/p)$ and $s= \min\{t,n^{l-2}p^{l-2}\}$ (so the exponent in (\ref{realeq}) is $-\Omega_{\delta,l}(n^2p^2s)$). For simplicity set $\gamma= \frac{1}{5l^2}$ and
\begin{equation}\label{epsilon}\epsilon = \frac{\delta}{(27l)^{l+1}}. \end{equation} 

Note that for a fixed $\nu$ and $p>\nu$, Theorem \ref{mainthm} is covered by Theorem \ref{JOR}. For us it is convenient to pick $\nu= e^{-4/\gamma}=e^{-20l^2}$. Of course, the partite version (Theorem \ref{realthm}) was not considered in \cite{JOR}, but it is not too hard to get this from Theorem \ref{JOR}: 
\begin{prop}\label{JORimp}
For $p>e^{-20l^2}$ Theorem \ref{realthm} follows from Theorem \ref{JOR}.
\end{prop}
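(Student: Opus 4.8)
The plan is to deduce the partite bound from the ordinary (non‑partite) upper tail of Theorem~\ref{JOR} by a coupling, together with a one‑sided (Janson) estimate that compensates for the over‑counting inherent in the naive coupling. First I would set $\G:=G_{ln,p}$ on the vertex set $V(\Hb)=V_1\cup\cdots\cup V_l$, coupled with $\Hb$ so that $E(\Hb)$ is exactly the set of edges of $\G$ running between consecutive parts (the remaining potential edges of $\G$ — inside a part, or between non‑consecutive parts — being included independently with probability $p$). Then $E(\Hb)\subseteq E(\G)$, so every copy of $C_l$ counted by $\xi'_l$ is a copy of $C_l$ in $\G$, and hence $\xi_l(\G)=\xi'_l+Y$, where $Y$ counts the copies of $C_l$ in $\G$ not already counted by $\xi'_l$. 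A routine count gives $\E\xi'_l=n^lp^l$ and $\E\xi_l(\G)=\binom{ln}{l}\tfrac{(l-1)!}{2}p^l=\big(\tfrac{l^{l-1}}{2}+o(1)\big)n^lp^l$, hence $\E Y=(\lambda_l+o(1))n^lp^l$ with $\lambda_l:=\tfrac{l^{l-1}}{2}-1>0$ (recall $l\ge 3$).

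The thing one must \emph{not} do is argue ``$\xi'_l>(1+\delta)n^lp^l$ implies $\xi_l(\G)>(1+\delta)n^lp^l$, now apply Theorem~\ref{JOR}'': for $l\ge 3$ the threshold $(1+\delta)n^lp^l$ lies well below $\E\xi_l(\G)$, so this is not an upper‑tail event for $\xi_l(\G)$. Instead I would control $Y$ \emph{from below}. For $\eta,\delta'>0$ small in terms of $l,\delta$,
\[
\{\xi'_l>(1+\delta)n^lp^l\}\ \subseteq\ \{Y<(1-\delta')\E Y\}\ \cup\ \{\xi_l(\G)>(1+\eta)\E\xi_l(\G)\},
\]
because off the first event $\xi_l(\G)=\xi'_l+Y>(1+\delta)n^lp^l+(1-\delta')\E Y$, which — using $\E\xi_l(\G)=n^lp^l+\E Y$ and $\E Y=(\lambda_l+o(1))n^lp^l$ — exceeds $(1+\eta)\E\xi_l(\G)$ once $\eta,\delta'$ are taken small enough. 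It then suffices to bound the two probabilities on the right.

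The second is immediate from Theorem~\ref{JOR}: $\pr\big(\xi_l(\G)>(1+\eta)\E\xi_l(\G)\big)<\exp[-\Omega_{l,\eta}(M_{C_l}(ln,p))]=\exp[-\Omega_{l,\eta}((ln)^2p^2)]$, and since $p>e^{-20l^2}$ we have $\ln(1/p)<20l^2$, so $(ln)^2p^2\ge\tfrac1{20}\,n^2p^2\ln(1/p)\ge\tfrac1{20}\min\{n^2p^2\ln(1/p),\,n^lp^l\}$, which is what (\ref{realeq}) demands. For the first, $Y=\sum_{\kappa}\prod_{e\in\kappa}\mathbf 1[e\in\G]$ is a sum of indicators of increasing events (the sum over $l$‑cycles $\kappa$ in $K_{ln}$ that are not of partite type), so Janson's inequality gives $\pr(Y<(1-\delta')\E Y)\le\exp[-\Omega_{\delta'}\big((\E Y)^2/(\E Y+\bar\Delta)\big)]$, where $\bar\Delta=\sum\pr(\kappa\cup\kappa'\subseteq\G)$ over distinct $\kappa,\kappa'$ sharing an edge; a routine estimate (the dominant pairs being $l$‑cycles sharing exactly one edge, giving $\bar\Delta=\Theta_l(n^{2l-2}p^{2l-1})$) yields $(\E Y)^2/(\E Y+\bar\Delta)=\Theta_l(n^2p)$, and since $p\ln(1/p)\le 1/e$ for all $p\in(0,1]$ we get $n^2p\ge\tfrac1e\,n^2p^2\ln(1/p)\ge\tfrac1e\min\{n^2p^2\ln(1/p),\,n^lp^l\}$. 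Adding the two estimates proves (\ref{realeq}) for $p>e^{-20l^2}$.

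The only real idea is the decomposition $\xi_l(\G)=\xi'_l+Y$ and the realization that $Y$ must be bounded from below — a one‑sided, sub‑Gaussian‑type estimate, which Janson supplies — rather than trying to route the event through $\xi_l(\G)$ directly. The rest is bookkeeping, the crux of which is that the Janson lower‑tail parameter for $Y$ is of order $n^2p$, comfortably larger than $\min\{n^2p^2\ln(1/p),n^lp^l\}$; nothing beyond Theorem~\ref{JOR} and a standard Janson‑type lower‑tail bound is needed.
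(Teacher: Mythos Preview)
Your proposal is correct and follows essentially the same approach as the paper: the paper couples $\Hb$ inside $\G=G_{ln,p}$, writes $\xi=\xi'+\xi''$ (your $\xi''$ is $Y$), uses Janson's lower-tail inequality to bound $\pr(\xi''<(1-\delta'')\E\xi'')$ by $\exp[-\Omega_{l}(n^2)]$ (equivalently your $\Theta(n^2p)$ since $p=\Omega_l(1)$), and combines this with Theorem~\ref{JOR} for $\xi$ via the same containment of events. The only cosmetic difference is that the paper phrases the arithmetic condition as $\delta\,\E\xi+\delta''\E\xi''<\delta'\E\xi'$ rather than your $(\delta-\eta)n^lp^l>(\eta+\delta')\E Y$, which is the same inequality in different notation.
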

\noindent This will be proved at the end of the section. 

In view of Proposition \ref{JORimp}, we may assume for the proof of Theorem \ref{realthm} that 
\begin{equation}\label{pupper}
  p\le e^{-4/\gamma}=e^{-20l^2}.  
\end{equation}

We may also assume: $\delta$ --- so also $\epsilon$ --- is (fixed but) small (since (\ref{realeq}) becomes weaker as $\delta$ grows); given $\delta$ and $l$, $n$ is large (formally, $n> n_{\delta,l}$); and, say,
\begin{equation}\label{plower}
    p>\epsilon^{-4}n^{-1}
\end{equation}
(since for smaller $p$, Theorem \ref{realthm} is trivial for an appropriate $\Omega_{\delta,l}$).
We say that an event occurs \emph{with large probability} (w.l.p.) if its probability is at least $1-\exp[-T\epsilon^4n^2p^2t]$ for some fixed $T>0$ and small enough $\epsilon$. We write ``$\alpha <^* \beta$" for ``w.l.p. $\alpha < \beta$". Note that, assuming (\ref{plower}), an intersection of $O(n)$ events that hold w.l.p. also holds w.l.p.

Let $V_i' = \{v \in V_i : \hat{d}(v)< np^{1-\gamma}\}$ and let $f(v_1,v_l)$ be the number of full paths with endpoints $v_1$ and $v_l$ in which each vertex is in the appropriate $V_i'$. 

\noindent The next two assertions imply Theorem \ref{realthm}: 
\begin{equation} \label{case1}
    \text{w.l.p. } |\{(v_1, \ldots, v_l) \in \C: \exists i (v_i \notin V'_i )\}|< (\delta/2) n^lp^l;
\end{equation}
\begin{equation} \label{case2}
    \pr(|\{(v_1, \ldots, v_l) \in \C: \forall i (v_i \in V'_i )\}|>(1+\delta/2)n^lp^l)< \exp[- \Omega_{\delta,l}(n^2p^2s)].
\end{equation}

\noindent We prove (\ref{case1}) in Section \ref{pf1} and (\ref{case2}) in Section \ref{pf2}. In Section \ref{pf3} we prove that 
\begin{equation}\label{pathtotal}
\sum_{v_1,v_l} f(v_1,v_l)<^* (1+\delta/8)n^lp^{l-1},
\end{equation}
which will be used in the proof of (\ref{case2}).

We now give the proof of Proposition \ref{JORimp}. To do so we require the following tail bound due to Janson (\cite{Jan}; see also \cite[Theorem 2.14]{JLR}).
\begin{lemma}\label{janson}
Let $\Gamma$ be a set of size $N$ and $\Gamma_p$ the random subset of $\Gamma$ in which each element is included with probability $p$ (independent of the other choices). Assume $\mathcal{S}$ is a family of non-empty subsets of $\Gamma$, and for each $A \in \mathcal{S}$ let $I_A= 1[A \subseteq \Gamma_p]$. Additionally, let $X = \sum_{A \in \mathcal{S}}I_A$. Define 
\begin{equation*}
\bar{\Delta}= \sum \sum_{A\cap B \neq \emptyset}\E(I_AI_B).
\end{equation*}
Then for $0\le t \le \E X$,
\begin{equation*}
    \pr(X\le \mu-t)\le \exp\left[\frac{-t^2}{2\bar{\Delta}}\right].
\end{equation*}
\end{lemma}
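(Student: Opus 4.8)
This is the classical lower‑tail form of Janson's inequality (\cite{Jan}; see also \cite[Theorem 2.14]{JLR}); I sketch the standard proof via the Laplace transform. Write $\mu=\E X=\sum_{A\in\mathcal S}\pi_A$ with $\pi_A=\E I_A=p^{|A|}$, and note that $\bar\Delta=\sum_{A\cap B\neq\emptyset}\E(I_AI_B)$, the sum over ordered pairs with the diagonal $A=B$ included, satisfies $\bar\Delta\ge\mu$. Fix $u\ge 0$ and set $\lambda=1-e^{-u}\in[0,1)$. Since $x\mapsto e^{-ux}$ is decreasing, Markov's inequality applied to $e^{-uX}$ gives the Chernoff‑type bound
$$\pr(X\le\mu-t)\le e^{u(\mu-t)}\,\E[e^{-uX}],$$
so everything reduces to an upper bound on $\E[e^{-uX}]$.

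The heart of the matter is the estimate $\E[e^{-uX}]\le\exp[-\lambda\mu+\tfrac12\lambda^2(\bar\Delta-\mu)]$. To prove it, enumerate $\mathcal S=\{A_1,\dots,A_m\}$ and telescope the product:
$$\log\E[e^{-uX}]=\sum_{i=1}^{m}\log\frac{\E\big[\prod_{j\le i}e^{-uI_{A_j}}\big]}{\E\big[\prod_{j<i}e^{-uI_{A_j}}\big]}=\sum_{i=1}^{m}\log\E_{Q_{i-1}}\!\big[e^{-uI_{A_i}}\big],$$
where $Q_{i-1}$ is the probability measure with density proportional to $\prod_{j<i}e^{-uI_{A_j}}$ with respect to the law of $\Gamma_p$. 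Because $e^{-uI_{A_i}}=1-\lambda I_{A_i}$, the $i$‑th summand equals $\log(1-\lambda q_i)\le-\lambda q_i$, where $q_i:=Q_{i-1}(I_{A_i}=1)$; hence $\log\E[e^{-uX}]\le-\lambda\sum_i q_i$. It remains to lower‑bound the $q_i$. Split the earlier indices into the dependency neighbours $D_i=\{j<i:A_j\cap A_i\neq\emptyset\}$ and the rest. The factor $\prod_{j<i,\,j\notin D_i}e^{-uI_{A_j}}$ is a decreasing function of $\Gamma_p$ supported on coordinates disjoint from $A_i$, so conditioning on the coordinates shared between the two blocks and applying the Harris inequality (the FKG inequality for product measures) shows it may be dropped, i.e.
$$q_i\ \ge\ \E\Big[I_{A_i}\prod_{j\in D_i}e^{-uI_{A_j}}\Big]\ \ge\ \E\Big[I_{A_i}\Big(1-\lambda\sum_{j\in D_i}I_{A_j}\Big)\Big]\ =\ \pi_{A_i}-\lambda\sum_{j\in D_i}\E[I_{A_i}I_{A_j}],$$
where the middle inequality uses $\prod_k(1-x_k)\ge 1-\sum_k x_k$ for $x_k\in[0,1]$. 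Summing over $i$ and using $\sum_i\sum_{j\in D_i}\E[I_{A_i}I_{A_j}]=\tfrac12(\bar\Delta-\mu)$ gives $\sum_i q_i\ge\mu-\tfrac12\lambda(\bar\Delta-\mu)$, hence the claimed bound on $\E[e^{-uX}]$.

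Plugging this into the Chernoff bound,
$$\pr(X\le\mu-t)\le\exp\!\Big[u(\mu-t)-\lambda\mu+\tfrac12\lambda^2(\bar\Delta-\mu)\Big],$$
and it remains to optimise over $u\ge0$, equivalently over $\lambda=1-e^{-u}\in[0,1)$. Expanding the exponent for small $\lambda$ gives $-\lambda t+\tfrac12\lambda^2(\bar\Delta-t)+O(\lambda^3)$, whose minimum is $-t^2/\big(2(\bar\Delta-t)\big)\le-t^2/(2\bar\Delta)$; a short (if slightly fiddly) computation with the usual estimates for $\ln(1-\lambda)$, using only the structural facts $t\le\mu\le\bar\Delta$, upgrades this heuristic to the exact bound $\pr(X\le\mu-t)\le\exp[-t^2/(2\bar\Delta)]$ for all admissible $t$.

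The only genuinely delicate point is the Harris/FKG step bounding the tilted conditional probabilities $q_i$: one must split the earlier events cleanly into those meeting $A_i$ — retained, and controlled by the elementary product inequality — and those disjoint from $A_i$, which, conditionally on the shared coordinates, live on independent coordinates and are monotone and so may be dropped by positive association. Obtaining the sharp constant $\tfrac12$ in the exponent also forces one to keep the refined quantity $\bar\Delta-\mu$ rather than $\bar\Delta$ in the Laplace‑transform estimate and to be careful in the final one‑variable optimisation; the cruder estimate only yields $-t^2/(4\bar\Delta)$.
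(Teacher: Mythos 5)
The paper itself does not prove Lemma \ref{janson}: it is quoted as a known result of Janson (\cite{Jan}; \cite[Theorem 2.14]{JLR}) and used as a black box in the proof of Proposition \ref{JORimp}, so there is no internal proof to compare against — what you have done is reprove the cited inequality. Your sketch is a correct variant of the standard Laplace-transform argument. The textbook route bounds $-\frac{\d}{\d u}\ln\E e^{-uX}\ge \sum_A\E[I_Ae^{-uX}]/\E e^{-uX}$ from below by conditioning on $I_A=1$ and applying Harris/FKG to decouple the summands meeting $A$ from the rest, then integrates in $u$ and optimizes, using $(1+x)\ln(1+x)-x\ge x^2/2$; your telescoping over the exponentially tilted measures $Q_{i-1}$ is an equivalent reorganization that rests on exactly the same Harris step (given the shared coordinates, $I_{A_i}\prod_{j\in D_i}e^{-uI_{A_j}}$ and $\prod_{j<i,\,j\notin D_i}e^{-uI_{A_j}}$ are conditionally independent with decreasing conditional expectations), plus the trivial denominator bound $\E\bigl[\prod_{j<i}e^{-uI_{A_j}}\bigr]\le\E\bigl[\prod_{j<i,\,j\notin D_i}e^{-uI_{A_j}}\bigr]$, which you use implicitly and should state when ``dropping'' that factor. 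The one step you assert rather than carry out — the final optimization — does close with the sharp constant: taking $\lambda=t/\bar\Delta$ and using $-\ln(1-\lambda)\le\lambda+\frac{\lambda^2}{2(1-\lambda)}$, your exponent is at most
\begin{equation*}
-\frac{t^2}{\bar\Delta}+\frac{t^2}{2\bar\Delta^2}\Bigl[\frac{(\mu-t)\bar\Delta}{\bar\Delta-t}+\bar\Delta-\mu\Bigr]\le-\frac{t^2}{2\bar\Delta},
\end{equation*}
the last inequality being exactly $\bar\Delta\ge\mu$ (the degenerate case $t=\mu=\bar\Delta$ is handled by letting $\lambda\to1$). So the proposal is sound; it simply supplies a proof for a lemma the paper imports from the literature.
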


\begin{proof}[Proof of Proposition \ref{JORimp}]
Let $\Hb$ be as in Theorem \ref{realthm} and regard $\Hb$ as a subgraph of $\G= G_{ln,p}$. Set $\xi=\xi_l(\G)$, $\xi'=\xi'_l(\Hb)$, and $\xi''=\xi-\xi'$; thus $\xi''$ is the number of cycles in $\G$ that are not of the form $(v_1, \ldots, v_l)$. Then $\E[\xi'']= \left(\frac{(ln)!}{(ln-l)!2l}-n^l\right)p^l$. We first use Lemma \ref{janson} to show 
\begin{equation*}
     \pr(\xi''<(1-\epsilon)\E\xi'')\le \exp[-\Omega_{l,\epsilon}(n^2)].
\end{equation*}
To apply Lemma \ref{janson} we take $\mathcal{S}$ to be the set cycles in $G$ not of the form $(v_1, \ldots, v_l)$ (so each $A \in \mathcal{S}$ is the edge set of a particular cycle). Note that when $|A \cap B|=k$ we have $\E[I_A I_B]=p^{2l-k}$. Furthermore, the number of pairs of cycles sharing exactly $k\ge 1$ edges is at most $c^k_l n^{2l-(k+1)}$ (for some constants $c_l^k$). Thus we have
\begin{equation*}
\bar{\Delta}\le \sum_k c^k_l n^{2l-(k+1)}p^{2l-k}=c_ln^{2l-2},
\end{equation*}
since $p =\Omega(1)$. Lemma \ref{janson}, with $t= \epsilon \E \xi''$, gives 
\begin{equation}\label{xi''}
    \pr(\xi''<(1-\epsilon)\E\xi'')\le \exp[-\Omega_{l,\epsilon}(n^2)].
\end{equation}
Furthermore, we claim that for any $\delta'>0$
\begin{equation}\label{highpeq}
    \pr(\xi'>(1+\delta')\E\xi')\le \pr(\xi''<(1-\delta'')\E \xi'')+\pr(\xi>(1+\delta)\E \xi),
\end{equation}
provided $\delta$ and $\delta''$ are such that $\delta \E \xi + \delta'' \E \xi''< \delta' \E \xi'$. 
This is because occurrence of the event on the l.h.s.\ implies occurrence of one of the events on the r.h.s.\ ; namely, if 
\begin{equation*}
\xi''\ge(1-\delta'')\E \xi'' \hspace{.5 in} \text{and} \hspace{.5 in} \xi \le (1+\delta)\E \xi,
\end{equation*}
then
\begin{align*}
    \xi' =\xi-\xi''
    &\le (1+\delta)\E \xi-(1-\delta'')\E \xi''\\
    &= \E \xi'+\delta \E \xi + \delta'' \E \xi''\\
    &<(1+\delta')\E \xi'.
\end{align*}

Therefore, for any $\eta>0$ we can select $\delta$ and $\delta''$ such that 
\begin{align*}
    \pr(\xi'>(1+\eta)\E\xi')&\le \pr(\xi''<(1-\delta'')\E \xi'')+\pr(\xi>(1+\delta)\E \xi)\\
    &<\exp[-\Omega_{\delta'',l}(n^2)]+\pr(\xi>(1+\delta)\E \xi)\\
    &<\exp[-\Omega_{\delta'',l}(n^2)]+\exp[-\Omega_{\delta,l}(n^2)],
\end{align*}
where the second inequality holds by (\ref{xi''}) and the third by Theorem \ref{JOR}.
\end{proof}

\section{Preliminaries}\label{prelim}
To prove (\ref{case1}) and (\ref{case2}) we need the following preliminaries, where $B(m, \alpha)$ is used for a random variable with the binomial distribution $\text{Bin}(m,\alpha)$. The first two of these are standard large deviation bounds; see e.g.\ \cite[Theorem A.1.12]{AS}, \cite[Theorem 2.1(a)]{JLR} and \cite[Lemma 8.2]{BC}. The others are applications of Lemma \ref{basic} that we will use repeatedly. 

\begin{lemma}\label{basic}

For any $\beta\in (0,1)$, $K\ge1+\beta$, $m$, and $\alpha$ we have,
\begin{equation}\label{basic1}
    \pr(B(m, \alpha)\ge Km \alpha)<\begin{cases}
    \exp[-\beta^2m\alpha/4] &\mbox{if $K\le 4$,}\\
    (e/K)^{Km \alpha} & \mbox{if $K>4$.}
    \end{cases}
\end{equation}

\end{lemma}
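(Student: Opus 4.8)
The plan is to derive both bounds by the standard exponential-moment (Chernoff) method. Writing $X = B(m,\alpha)$, I would first record the estimate $\E e^{\lambda X} = (1-\alpha+\alpha e^\lambda)^m \le \exp[m\alpha(e^\lambda-1)]$ for $\lambda \ge 0$, using $1+x\le e^x$. Markov's inequality then gives $\pr(X \ge Km\alpha) \le \exp[-\lambda Km\alpha + m\alpha(e^\lambda-1)]$ for every $\lambda \ge 0$; since $K \ge 1+\beta > 1$ the choice $\lambda = \ln K \ge 0$ is admissible and produces the single master estimate
\[
\pr(X \ge Km\alpha) \;\le\; \exp[m\alpha(K-1-K\ln K)] \;=\; \left(\frac{e^{K-1}}{K^K}\right)^{m\alpha}.
\]

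From here the two cases come out of elementary manipulations. When $K > 4$ I would simply discard the $-1$, using $K-1-K\ln K \le K(1-\ln K)$, which rewrites the master estimate as $\exp[Km\alpha(1-\ln K)] = (e/K)^{Km\alpha}$ — the second bound (in fact this form is valid for all $K>1$, not just $K>4$). When $1+\beta \le K \le 4$, I would first note that $\pr(X \ge Km\alpha)$ is non-increasing in $K$, so it suffices to treat the extreme case $K = 1+\beta$; the claim then reduces to showing $\exp[m\alpha((1+\beta)-1-(1+\beta)\ln(1+\beta))] \le \exp[-\beta^2 m\alpha/4]$, i.e. $(1+\beta)\ln(1+\beta) - \beta \ge \beta^2/4$.

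This last inequality is the only genuinely calculational point, and it is where the hypothesis $\beta < 1$ is used: setting $\phi(\beta) = (1+\beta)\ln(1+\beta) - \beta - \beta^2/4$ one has $\phi(0) = \phi'(0) = 0$ and $\phi''(\beta) = (1+\beta)^{-1} - \tfrac12 \ge 0$ for $\beta \le 1$, so $\phi \ge 0$ on $[0,1]$. I do not anticipate any real obstacle: the lemma is just a packaging of the Chernoff bound, and the only points requiring care are (i) verifying that $\lambda = \ln K$ is a legitimate nonnegative choice, which follows from $K > 1$, and (ii) the monotonicity-in-$K$ reduction, which lets the single parameter $\beta$ control the first bound even though $K$ is allowed to range up to $4$.
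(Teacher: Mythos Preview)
Your proof is correct and is exactly the standard Chernoff argument. The paper does not actually prove this lemma at all; it simply cites \cite[Theorem A.1.12]{AS}, \cite[Theorem 2.1(a)]{JLR}, and \cite[Lemma 8.2]{BC} as standard large deviation bounds, and those references establish the result by the same exponential-moment method you outline (master bound $\exp[m\alpha(K-1-K\ln K)]$, then the two specializations).
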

\noindent When $m=n$ and $\alpha=p$ (which is what we have when our binomial random variable is $d_{V_{i-1}}(v_{i})$ or $d_{V_{i+1}}(v_i)$) and $K\ge1+\epsilon$ we use $q_K$ for the right hand side of (\ref{basic1}); that is,
\begin{equation}\label{qk}
q_K\coloneqq \begin{cases}
    \exp[-\epsilon^2np/4] &\mbox{if $K\le 4$,}\\
    (e/K)^{Knp} & \mbox{if $K>4$.}
    \end{cases}
\end{equation} 
First note that for any $K$ ($\ge 1+\epsilon$) we have,
\begin{equation}\label{weakqk}
    q_K\le \exp[-\epsilon^2Knp/16].
\end{equation}
Of course this is unnecessarily weak when $K$ is not close to 1 (as was the first bound in (\ref{basic})), but is often enough for our purposes and will be used repeatedly below.
It will also be useful to have the following upper bound on $q_K$ when $K\ge p^{-\gamma/2}$ (recall $\gamma$ was defined before (\ref{epsilon})):
\begin{equation}\label{qkbound}
    q_K \le \exp[-\gamma Knpt/4]<n^{-2}.
\end{equation}
To show the first inequality holds note that $K \ge p^{-\gamma/2}$ and $p\le e^{-4/\gamma}$ (see (\ref{pupper})) imply $K\ge e^{2}$ and \begin{equation*}
   (e/K)^{Knp}\le \exp\left[Knp\left(1-\frac{\gamma}{2}t\right)\right]. 
\end{equation*}
Again $p\le e^{-4/\gamma}$ implies $t\ge 4/\gamma$ giving the first inequality in (\ref{qkbound}):
\begin{equation*}
q_K=(e/K)^{Knp}\le\exp[-\gamma Knpt/4].
\end{equation*} 
The second inequality in (\ref{qkbound}) follows easily from the combination of $t \ge 4/\gamma$ and the fact that $p$ is not extremely small (see (\ref{plower})).

\begin{lemma}\label{sum}
Suppose $w_1, \ldots, w_m \in [0,z]$. Let $\zeta_1, \ldots, \zeta_m$ be independent Ber\-noullis, $\zeta= \sum \zeta_i w_i$, and $\E \zeta =\mu$. Then for any $\nu >0$ and $\lambda> \nu \mu$,
\begin{equation*}
    \P(\zeta> \mu+\lambda)< \exp[-\Omega_\nu(\lambda/z)].
\end{equation*}
\end{lemma}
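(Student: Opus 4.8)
The plan is to prove this by the standard exponential–moment (Chernoff--Bernstein) method, with a little care taken over the non-uniform weights $w_i$ and over extracting the correct dependence on $\nu$.

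First I would reduce to the case $z=1$. Replacing each $w_i$ by $w_i/z$, and $\mu,\lambda$ by $\mu/z,\lambda/z$, leaves the hypothesis $\lambda>\nu\mu$ unchanged and turns the desired conclusion into $\P(\zeta>\mu+\lambda)<\exp[-\Omega_\nu(\lambda)]$, so from now on I assume $w_i\in[0,1]$. For a parameter $\theta>0$ to be chosen, Markov's inequality applied to $e^{\theta\zeta}$ gives
\begin{equation*}
\P(\zeta>\mu+\lambda)\le e^{-\theta(\mu+\lambda)}\prod_{i=1}^m\E e^{\theta w_i\zeta_i}.
\end{equation*}
Writing $p_i=\P(\zeta_i=1)$ and using convexity of $x\mapsto e^{\theta x}$ on $[0,1]$ (so $e^{\theta x}\le 1+(e^\theta-1)x$ there, applied with $x=w_i$), one gets $\E e^{\theta w_i\zeta_i}=1+p_i(e^{\theta w_i}-1)\le 1+(e^\theta-1)p_iw_i\le\exp[(e^\theta-1)p_iw_i]$. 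Multiplying over $i$ and using $\sum_i p_iw_i=\E\zeta=\mu$ yields
\begin{equation*}
\P(\zeta>\mu+\lambda)\le\exp\bigl[(e^\theta-1)\mu-\theta(\mu+\lambda)\bigr]=\exp\bigl[\mu(e^\theta-1-\theta)-\theta\lambda\bigr].
\end{equation*}

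Finally I would use the hypothesis $\mu<\lambda/\nu$ together with the elementary bound $e^\theta-1-\theta\le\theta^2$ valid for $\theta\in[0,1]$ to obtain, for any $\theta\le 1$,
\begin{equation*}
\P(\zeta>\mu+\lambda)\le\exp\Bigl[\lambda\Bigl(\tfrac{\theta^2}{\nu}-\theta\Bigr)\Bigr].
\end{equation*}
Choosing $\theta=\min\{\nu/2,1\}$ forces $\theta/\nu\le 1/2$, so the coefficient satisfies $\tfrac{\theta^2}{\nu}-\theta=\theta(\tfrac{\theta}{\nu}-1)\le-\theta/2\le-\tfrac14\min\{\nu,2\}$, a strictly negative constant depending only on $\nu$. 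Hence $\P(\zeta>\mu+\lambda)\le\exp[-\tfrac14\min\{\nu,2\}\,\lambda]=\exp[-\Omega_\nu(\lambda)]$, and undoing the rescaling ($\lambda\mapsto\lambda/z$) gives the claimed $\exp[-\Omega_\nu(\lambda/z)]$.

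I do not expect a genuine obstacle here: this is a routine Bernstein-type estimate. The only points needing mild care are the handling of the distinct weights $w_i$ — resolved by the linearization $e^{\theta x}\le 1+(e^\theta-1)x$ on $[0,1]$, which lets $\mu=\sum_ip_iw_i$ reappear after taking the product — and the tuning of $\theta$ as a function of $\nu$ so that the constraint $\mu\le\lambda/\nu$ makes the exponent negative while $\theta\le 1$ keeps the quadratic bound $e^\theta-1-\theta\le\theta^2$ valid.
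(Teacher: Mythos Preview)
Your proof is correct: the reduction to $z=1$, the linearization $e^{\theta w_i}\le 1+(e^\theta-1)w_i$ via convexity, and the choice $\theta=\min\{\nu/2,1\}$ all go through, yielding the bound $\exp[-\tfrac14\min\{\nu,2\}\,\lambda/z]$. The paper does not actually prove this lemma; it simply quotes it as a standard large deviation bound (citing \cite[Lemma 8.2]{BC}), so your Chernoff--Bernstein argument is exactly the kind of routine verification the paper is outsourcing.
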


The last two lemmas are the basis for much of what follows. Lemma \ref{degsum} in particular may be regarded as perhaps the main idea for sections \ref{pf1} and \ref{pf3}; it allows us to bound sums of atypically large degrees, which we then use to bound the number of cycles that include vertices of ``large'' degree (in Section \ref{pf1}) and the number of full paths without vertices of ``large'' degree (in Section \ref{pf3}).

\begin{lemma}\label{degsize}
For $K \ge 1+\epsilon$ and any $i$, 
\begin{equation}\label{degsize1}
|\{v_i \in V_i : \hat{d}(v_i)\ge K np\}|<^*r_K \coloneqq \begin{cases}
6\epsilon K^{-l}n & \text{if } q_K> n^{-2},\\ 
\frac{\epsilon^2 npt}{K \ln K} &\text{otherwise}. 
\end{cases}
\end{equation}
\end{lemma}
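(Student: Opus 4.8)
The plan is to reduce to a binomial tail estimate by stochastic domination and then apply Lemma~\ref{basic}. Fix $i$ and let $N=|\{v_i\in V_i:\hat d(v_i)\ge Knp\}|$. For a fixed $v_i$ the degrees $d_{V_{i-1}}(v_i)$ and $d_{V_{i+1}}(v_i)$ count disjoint sets of potential edges, hence are independent copies of $B(n,p)$, and a union bound with the definition (\ref{qk}) of $q_K$ gives $\pr(\hat d(v_i)\ge Knp)\le 2q_K$. Since for distinct $v_i,v_i'\in V_i$ the events $\{\hat d(v_i)\ge Knp\}$ and $\{\hat d(v_i')\ge Knp\}$ likewise depend on disjoint edge sets, $N$ is stochastically dominated by $B(n,2q_K)$. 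So it suffices to show $\pr(B(n,2q_K)\ge r_K)<\exp[-\Omega(\epsilon^4 n^2p^2t)]$, and the work is in choosing the right deviation bound from (\ref{basic1}) in each of the two regimes defining $r_K$.

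If $q_K>n^{-2}$ then $r_K=6\epsilon K^{-l}n$, and this regime is very constrained: combined with (\ref{weakqk}), $q_K>n^{-2}$ forces $\epsilon^2Knp/16<2\ln n$, so $np=O_\epsilon(\ln n)$ and hence $\epsilon^4n^2p^2t=O_\epsilon((\ln n)^3)$; meanwhile the contrapositive of the first inequality in (\ref{qkbound}) gives $K<p^{-\gamma/2}<n^{1/(10l^2)}$, so $r_K=6\epsilon K^{-l}n>6\epsilon\,n^{1-1/(10l)}$ is polynomially large in $n$. Using (\ref{weakqk}) and $np>\epsilon^{-4}$ one checks that $r_K$ exceeds $\E[B(n,2q_K)]=2nq_K$ by a factor at least $e^2$, so the ``$K>4$'' branch of (\ref{basic1}) gives $\pr(B(n,2q_K)\ge r_K)<\exp[-\Omega(r_K)]$, far below $\exp[-\Omega((\ln n)^3)]$ for $n$ large.

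If $q_K\le n^{-2}$ then $r_K=\epsilon^2npt/(K\ln K)$ and $\E[B(n,2q_K)]\le 2/n$. I would split according to whether $r_K<1$ or $r_K\ge1$. When $r_K<1$ the assertion is that $N=0$; here $r_K<1$ forces $K\ln K>\epsilon^2npt$, hence $K$ is large, hence $q_K=(e/K)^{Knp}\le\exp[-\tfrac12 Knp\ln K]\le\exp[-\tfrac12\epsilon^2n^2p^2t]$, and since the standing assumptions $p\le e^{-20l^2}$ and $p>\epsilon^{-4}n^{-1}$ force $n^2p^2t\gg\ln n$, already the naive bound $\pr(N\ge1)\le 2nq_K$ beats $\exp[-\Omega(\epsilon^4n^2p^2t)]$. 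When $r_K\ge1$ I would use $\pr(B(n,2q_K)\ge r_K)\le\binom{n}{\lceil r_K\rceil}(2q_K)^{\lceil r_K\rceil}\le(2enq_K/r_K)^{r_K}$, take logarithms, and lower-bound $\ln(1/q_K)$: the estimate $\ln(1/q_K)\ge\epsilon^2Knp/16$ from (\ref{weakqk}) contributes $r_K\cdot\epsilon^2Knp/16=\epsilon^4n^2p^2t/(16\ln K)$, while for the larger values $K\ge p^{-\gamma/2}$ the sharper estimate $\ln(1/q_K)\ge\gamma Knpt/4$ from (\ref{qkbound}) contributes $\gamma\epsilon^2n^2p^2t^2/(4\ln K)\ge\tfrac{\gamma}{4}\epsilon^2n^2p^2t$ (using $\ln K\le\ln(1/p)=t$); in each regime one then absorbs the remaining $-r_K\ln(2en)$ term using $q_K\le n^{-2}$, $np>\epsilon^{-4}$, and the bound $Knp\ge 2\ln n/\ln(K/e)$ coming from $q_K\le n^{-2}$.

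The routine parts are the domination step and the $q_K>n^{-2}$ regime. The delicate part is the $q_K\le n^{-2}$, $r_K\ge1$ regime: one must balance the $\ln K$ in the denominator of $r_K$ against the best available lower bound on $\ln(1/q_K)$ uniformly over $1+\epsilon\le K\le 1/p$, and carefully track the lower-order logarithmic ($\ln n$) terms; this is where essentially all of the constant-chasing lives.
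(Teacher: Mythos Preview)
Your approach is essentially the paper's: dominate the count $N$ by a binomial and apply Lemma~\ref{basic}. (The paper actually bounds the one-sided count $|\{v_1:d_{V_2}(v_1)\ge Knp\}|$ by $r_K/2$ rather than working with $\hat d$ and $2q_K$, but this is cosmetic.) Your handling of the $q_K>n^{-2}$ regime and of the subcase $q_K\le n^{-2}$, $r_K<1$ is fine and matches the paper.

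There is, however, a real gap in the subcase $q_K\le n^{-2}$, $r_K\ge1$, specifically for $4<K<p^{-\gamma/2}$. You lower-bound $\ln(1/q_K)$ by $\epsilon^2Knp/16$ via (\ref{weakqk}), obtaining the main term
\[
r_K\ln(1/q_K)\ \ge\ \frac{\epsilon^4n^2p^2t}{16\ln K}.
\]
But for $K$ near $p^{-\gamma/2}$ one has $\ln K\approx\gamma t/2$, so this is only $\Omega(\epsilon^4n^2p^2)$, missing the factor $t$ that ``w.l.p.'' requires; absorbing the $r_K\ln(2en)$ term via $q_K\le n^{-2}$ does not recover it. The fix is immediate: for $K>4$ use the exact value $\ln(1/q_K)=Knp\ln(K/e)$ rather than (\ref{weakqk}); then
\[
r_K\ln(1/q_K)=\epsilon^2n^2p^2t\cdot\frac{\ln(K/e)}{\ln K},
\]
and since $\ln(K/e)/\ln K$ is bounded below by an absolute constant for $K>4$, you get $\Omega(\epsilon^2n^2p^2t)$ uniformly. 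The paper sidesteps the issue with a clean trick: from $q_K\le n^{-2}$ and $r\ge1$ one has $nq_K/r\le nq_K\le\sqrt{q_K}$, so $(2enq_K/r)^{r/2}\le(2e\sqrt{q_K})^{r/2}$, and then the exact formula for $q_K$ gives $r\ln(1/q_K)=\Omega(\epsilon^4n^2p^2t)$ directly.
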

\noindent The first, \emph{ad hoc} value is for use in Section \ref{pf3} while the second will be used throughout. Convenient bounds for the second expression in (\ref{degsize1}) are
\begin{equation}\label{degsize2}
    \frac{\epsilon^2 npt}{K \ln K}< \begin{cases}
    2\epsilon npt/K & \text{if }K> 1+\epsilon,\\
    \epsilon np/K & \text{if }K> p^{-\epsilon}.
    \end{cases}
\end{equation}

\begin{proof}[Proof of Lemma \ref{degsize}]
Let $q=q_K$ and $r=\min\{r_K,1\}$. We let $r=\min\{r_K,1\}$ because later it will be helpful to have $n/r \le n$. We can enforce this lower bound on $r$ because if $r_K<1$ then 
\begin{equation*}
    \pr(|\{v_i \in V_i : \hat{d}(v_i)\ge K np\}|\ge r)= \pr(|\{v_i \in V_i : \hat{d}(v_i)\ge K np\}|\ge 1).
\end{equation*} Without loss of generality, let $i=1$. We show 
\begin{equation}\label{degsizepf1}
    |\{v_1 \in V_1 : d_{V_2}(v_1)\ge K np\}|<^*r/2.
\end{equation}
Write $N$ for the left hand side of (\ref{degsizepf1}).
We first assume $q \le n^{-2}$. Since the $d_{V_2}(v_1)$'s ($v_1\in V_1$) are independent copies of $B(n,p)$, two applications of Lemma \ref{basic} give 
\begin{align*}
    \pr(N \ge r)&< \pr(B(n,q)\ge \lceil r/2 \rceil)\\
    &<(2enq/r)^{r/2}\\
    &\le(2e\sqrt{q})^{r/2} \\
    &< \exp[-\Omega(\epsilon^4n^2p^2 t)].
\end{align*}
The third inequality holds since $q\le n^{-2}$, so $n/r \le n\le q^{-1/2}$. \\
Now assume $q>n^{-2}$. Recall from (\ref{weakqk}) that we always have 
\begin{equation*}
    q\le \exp[-\epsilon^2Knp/16].
\end{equation*}
So, $$n^{-2}<q\le \exp[-\epsilon^2Knp/16]$$ implies
\begin{equation}\label{degsizepf}
    Knp<32\epsilon^{-2}\log n,
\end{equation}
On the other hand (\ref{plower}) gives 
\begin{equation*}
q < \exp[-\epsilon^2Knp/16]< \exp[-\epsilon^{-2}K/16]<\epsilon K^{-l}.
\end{equation*} The last inequality uses the fact that $\exp[\epsilon^{-2}K/16]\epsilon K^{-l}$ is minimized at $K=16l\epsilon^2$ and $\epsilon<\left(\frac{e}{16l}\right)^{l/(2l-1)}$ (as we may assume). Hence
$$\pr(N\ge r/2) <\pr(B(n,q)\ge r/2)<\exp[-\Omega(\epsilon n K^{-l})]<\exp[-\Omega(n^2p^2t)],$$
where the second inequality uses $r/2> 3nq$ (and Lemma \ref{basic}) and the (very crude) third inequality uses $K^{l-2}<n/\log^3n$ which follows from (\ref{degsizepf}) and (\ref{plower}).
\end{proof}

\begin{lemma}\label{degsum}
For $p> \frac{64\epsilon^{-2}\ln n}{n}$ and any $i$, 
\begin{equation} \label{degsumeq}
    \sum \left\{\hat{d}(v_i):\hat{d}(v_i)>(1+\epsilon) np\right\} <^* 
    \epsilon^2 n^2p^2t, 
\end{equation}
and
\begin{equation}\label{degsumeq2}
    \sum \left\{\hat{d}(v_i):\hat{d}(v_i)>np^{1-\gamma/2}\right\}<^* \epsilon n^2p^2.
\end{equation}
\end{lemma}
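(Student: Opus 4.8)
The plan is to prove (\ref{degsumeq}) and (\ref{degsumeq2}) directly, by an exponential–moment (Chernoff) bound on the truncated degree sum, rather than via Lemma \ref{degsize} and a dyadic decomposition over degree scales — the latter works but, because $r_K$ carries a factor $t$ and the binomial tail is only as small as $(e/K)^{Knp}$, picks up spurious $\ln t$ (and $\epsilon^{-1}$) factors. Fix $i$. Since $\hat d(v_i)=\max\{d_{V_{i-1}}(v_i),d_{V_{i+1}}(v_i)\}$ and $\max(a,b)\,1[\max(a,b)>M]\le a\,1[a>M]+b\,1[b>M]$, it is enough, in each of the two statements, to bound for the relevant threshold $M$ (namely $M=(1+\epsilon)np$ for (\ref{degsumeq}) and $M=np^{1-\gamma/2}$ for (\ref{degsumeq2})) and each of the two neighbours of $V_i$ the quantity
\[
Z:=\sum_{v_i\in V_i}d_{V_{i\pm1}}(v_i)\,1[d_{V_{i\pm1}}(v_i)>M],
\]
namely to show $Z<^*\tfrac12\epsilon^2 n^2p^2t$ (resp. $Z<^*\tfrac12\epsilon n^2p^2$); the two pieces then add. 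The summands of $Z$ are independent (they depend on disjoint sets of potential edges) and each has the distribution of $Y:=B\cdot 1[B>M]$, where $B=B(n,p)$.

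Next I would estimate the per-vertex exponential moment. Since $e^{\theta Y}=1$ on $\{B\le M\}$,
\[
\E e^{\theta Y}=1+\E[(e^{\theta B}-1)\,1[B>M]]\le 1+\sum_{k>M}e^{\theta k}\,\pr(B\ge k),
\]
and I bound $\pr(B\ge k)\le e^{-\psi_k}$ by Lemma \ref{basic} (choosing $\beta$ optimally): $\psi_k\ge\tfrac14(k/np-1)^2 np$ for $(1+\epsilon)np<k<2np$, $\psi_k\ge np/4$ for $2np\le k\le 4np$, and $\psi_k\ge k\ln\!\big(k/(enp)\big)$ for $k>4np$. So the $k$-th term is at most $\exp[\theta k-\psi_k]$, and the whole point is the choice of $\theta$. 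For (\ref{degsumeq}) take $\theta=\epsilon^2/8$: then $\theta k-\psi_k<0$ for every $k>(1+\epsilon)np$, with supremum $-\Omega(\epsilon^2 np)$ attained essentially at $k\approx(1+\epsilon)np$, and the series is of geometric type, so $\E e^{\theta Y}\le 1+\exp[-\Omega(\epsilon^2 np)]$. For (\ref{degsumeq2}) take $\theta=\tfrac{\gamma}{8}t$; this is legitimate because at the threshold $k=np^{1-\gamma/2}$ (where $k/np=p^{-\gamma/2}>4$ by (\ref{pupper})) one has $\psi_k\ge\tfrac{\gamma}{4}t\,np^{1-\gamma/2}$ while $\ln(k/np)=\tfrac{\gamma}{2}t>\theta$ (using $t\ge 4/\gamma$), so again $\theta k-\psi_k<0$ throughout, the series is geometric, and $\E e^{\theta Y}\le 1+\exp[-\Omega(np^{1-\gamma/2})]$. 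In both cases the hypothesis $p>64\epsilon^{-2}\ln n/n$ makes the error term at most $n^{-2}$, so $(\E e^{\theta Y})^n\le(1+n^{-2})^n\le 2$.

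Finally, Markov applied to $e^{\theta Z}=\prod_{v_i}e^{\theta Y_{v_i}}$ gives $\pr(Z>a)\le 2e^{-\theta a}$. For (\ref{degsumeq}), $a=\tfrac12\epsilon^2 n^2p^2t$ and $\theta=\epsilon^2/8$ give $\theta a=\tfrac1{16}\epsilon^4 n^2p^2t$; for (\ref{degsumeq2}), $a=\tfrac12\epsilon n^2p^2$ and $\theta=\tfrac{\gamma}{8}t$ give $\theta a=\tfrac1{16}\gamma\epsilon\, n^2p^2t$, which is also $\Omega(\epsilon^4 n^2p^2t)$ since $\gamma=\Theta_l(1)$ while $\epsilon$ is tiny (so $\gamma\gg\epsilon^3$). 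Hence each $Z$ is w.l.p. below its target, and adding the (at most two) contributions proves the two displays. The step I expect to demand the most care is the exponential-moment estimate: one has to check that the single value of $\theta$ keeps $\theta k-\psi_k$ strictly negative over the \emph{entire} tail $k>M$ — the mildly awkward zone is $k\in[2np,4np]$, where Lemma \ref{basic} only gives the crude rate $\psi_k\ge np/4$, but $\theta k\le 4\theta np=\tfrac12\epsilon^2 np<np/4$ keeps it negative — and that the resulting geometric-type series sums to something small enough that the per-vertex moment is $1+o(n^{-1})$ and the final exponent stays $\Omega(\epsilon^4 n^2p^2t)$.
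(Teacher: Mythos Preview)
Your proof is correct and takes a genuinely different route from the paper's. The paper proves both displays by a dyadic decomposition: it partitions the set of high-degree vertices into level sets $W^j=\{v_i:2^j M<\hat d(v_i)\le 2^{j+1}M\}$, bounds $\pr(v_i\in W^j)$ via Lemma~\ref{basic}, and then takes a union bound over all size profiles $(a_0,\dots,a_J)$ with $\sum_j a_j 2^{j+1}M$ exceeding the target, bounding each profile's probability by $\prod_j\binom{n}{a_j}\exp[-a_j\cdot(\text{rate})]$ and the number of profiles by $n^J<n^{2t}$. You instead compute the moment generating function of the truncated degree $Y=B\,1[B>M]$ directly, exploit the independence of the $Y_{v_i}$'s (for a fixed direction) to get $\E e^{\theta Z}=(\E e^{\theta Y})^n$, and finish with Markov.

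Your approach is more streamlined: it avoids the enumeration over $(a_0,\dots,a_J)$ and the slightly delicate step of showing $n^{2t}$ is beaten by the exponential. The price is that you must verify, in one shot, that a single $\theta$ keeps $\theta k-\psi_k$ negative across all three Chernoff regimes (which you do, including the transitional zone $k\in[2np,4np]$). The paper's approach, by contrast, is more modular---it reuses Lemma~\ref{basic} as a black box per level set---and perhaps better suited if one later wants finer control on individual $|W^j|$'s (as in Lemma~\ref{degsize}). Either method lands on the required $\exp[-\Omega(\epsilon^4 n^2p^2t)]$ failure probability.
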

There is nothing special about $\gamma/2$ here; it is simply a value that will work for our purposes. The reason for the particular --- and not very important --- lower bound on $p$ will appear following (\ref{degsumpf2}).
\begin{proof}
First we show (\ref{degsumeq}). To slightly lighten the notation we fix $i$ and set
\begin{equation*}
    W=\{v_i : \hat{d}(v_i)> (1+\epsilon)np\}.
\end{equation*}
We partition $W=\bigcup_{j=0}^J W^j$ (where $J \coloneqq \log_2((p(1+\epsilon))^{-1})-1<2t$), with
$$W^j= \{v_i: 2^j(1+\epsilon)np< \hat{d}(v_i)\le 2^{j+1}(1+\epsilon)np\}.$$ It suffices to show 
\begin{equation}\label{sumets}
\sum_{j=0}^J |W^j|2^{j+1}(1+\epsilon)np <^* \epsilon^2n^2p^2t.
\end{equation}
Lemma \ref{basic} (using just (\ref{weakqk})) gives 
\begin{align*}
    \pr(v_i \in W^j)& \le \pr(\hat{d}(v_i)> 2^j (1+\epsilon)np)\\
    &\le 2\exp[-\epsilon^2 2^{j-4}np]< \exp[-\epsilon^2 2^{j-5}np].
\end{align*}
Thus, for any $(a_0, \ldots, a_J)$,
\begin{align}
    \pr\left(|W^0|=a_0,\ldots, |W^J|=a_j\right) & < \exp\left[\sum_{j=0}^J - a_j \epsilon^2 2^{j-5}np\right]\prod_{j=0}^J{n \choose a_j} \nonumber \\
& <\exp\left[\sum_j a_j(\ln n- \epsilon^2 2^{j-5}np)\right] \nonumber \\
& \le \exp\left[\sum_j -a_j \epsilon^2 2^{j-6}np\right].\label{degsumpf2}
\end{align}
For (\ref{degsumpf2}) we note that $p > \frac{64\epsilon^{-2}\ln n}{n}$, so $\epsilon^2 2^{j-5}np\ge 2\ln n$. 

On the other hand, for (\ref{sumets}) it is enough to show 
\begin{equation}\label{asat1}
\sum_{(a_0, \ldots, a_J)}\pr\left(|W^0|=a_0,\ldots, |W^J|=a_j\right)<\exp[-T\epsilon^4n^2p^2t]
\end{equation}
for some constant $T>0$ (not depending on $\epsilon$), where we sum over $(a_0, \ldots, a_J)$ satisfying
\begin{equation}\label{asat2}
    \sum_j a_j2^{j+1}(1+\epsilon)np>\epsilon^2n^2p^2t.
\end{equation}
Here we can just bound the number of terms in (\ref{asat1}) by the trivial 
\begin{equation*}
n^J< \exp[2t\log n],
\end{equation*} while (in view of (\ref{asat2})) (\ref{degsumpf2}) bounds the individual summands in (\ref{asat1}) by $$\exp[-\Omega(\epsilon^4n^2p^2t)].$$ Moreover, the lemma's lower bound on $p$ (or the weaker $p \gg \frac{\log^{1/2}n}{n}$) implies $n^2p^2t \gg t \log n$. So the left hand side of (\ref{asat1}) is at most
\begin{equation*}
    \exp[2t \log n -\Omega(\epsilon^4n^2p^2t)]= \exp[-\Omega(\epsilon^4n^2p^2t)],
\end{equation*}
as desired.

To show (\ref{degsumeq2}) we now let $W = \{v_i : \hat{d}(v_i)> np^{1-\gamma/2}\}$. As before, we partition $W=\bigcup_{j=0}^J W^j$ (where $J \coloneqq \log_2(p^{-1+\gamma/2})-1<2t$) with
$$W^j= \{v_i: 2^jnp^{1-\gamma/2}< d(v_i)\le 2^{j+1}np^{1-\gamma/2}\}.$$ It suffices to show 
\begin{equation}\label{sumets2}
\sum_{j=0}^J |W^j|2^{j+1}np^{1-\gamma/2} <^* \epsilon n^2p^2.
\end{equation}
Lemma \ref{basic} and (\ref{qkbound}) give 
\begin{align*}
\pr(v_i \in W^j) &\le \pr(\hat{d}(v_i)> 2^j np^{1-\gamma/2})\\
&\le 2\exp[- \gamma 2^{j-2}np^{1-\gamma/2}t]< \exp[-\gamma 2^{j-3}np^{1-\gamma/2}t].
\end{align*}
Thus, for any $(a_0, \ldots, a_J)$,
\begin{align}
    \pr\left(|W^0|=a_0,\ldots, |W^J|=a_J\right) & <\exp\left[\sum_{j=0}^J - a_j \gamma 2^{j-3} np^{1-\gamma/2}t\right] \prod_{j=0}^J {n \choose j}\nonumber \\
& <\exp\left[\sum_j a_j(\ln n- \gamma 2^{j-3}np^{1-\gamma/2}t)\right] \nonumber \\
& <\exp\left[\sum_j -a_j \gamma 2^{j-4}np^{1-\gamma/2}t\right].\label{degsumpf1}
\end{align}
((\ref{degsumpf1}) follows from $\gamma 2^{j-3}np^{1-\gamma/2}t \gg \ln n$, in this case a very weak consequence of our assumed lower bound on $p$.)

For (\ref{sumets2}) it is enough to show 
\begin{equation}\label{asat3}
\sum_{(a_0, \ldots, a_J)}\pr\left(|W^0|=a_0,\ldots, |W^J|=a_j\right)<\exp[-T\epsilon^4n^2p^2t]
\end{equation}
for some constant $T>0$ (not depending on $\epsilon$) where we sum over $(a_0, \ldots, a_J)$ satisfying
\begin{equation}\label{asat4}
    \sum_j a_j2^{j+1}np^{1-\gamma/2}>\epsilon n^2p^2.
\end{equation}
Again we can just bound the number of terms in (\ref{asat3}) by the trivial 
\begin{equation*}
n^J< \exp[2t\log n],
\end{equation*} while (in view of (\ref{asat4})) (\ref{degsumpf1}) bounds the individual summands by $$\exp[-\Omega(\epsilon n^2p^2t)].$$
Again since the lemma's lower bound on $p$ (or the weaker $p \gg \frac{\log^{1/2}n}{n}$) implies $n^2p^2t \gg t \log n$, the left hand side of (\ref{asat3}) is at most
\begin{equation*}
    \exp[2t \log n -\Omega(\epsilon n^2p^2t)]= \exp[-\Omega(\epsilon n^2p^2t)]],
\end{equation*}
as desired.
\end{proof}


We will also make use of the fact that for any $\beta>0$, $k$, and $p$,
\begin{equation}\label{ptbound}
    p^\beta \ln^k(1/p) \le \left(\frac{k}{e\beta}\right)^k.
\end{equation}
To see this let $f(p)= p^\beta \ln^k(1/p)$, and notice that 
\begin{align*}f'(p)&=-kp^{\beta-1} \ln^{k-1}(1/p)+\beta p^{\beta-1}\ln^k(1/p)\\ &= p^{\beta-1}\ln^{k-1}(1/p)(-k+\beta\ln(1/p)). 
\end{align*} 
Thus $f(p)$ is maximized at $p=e^{-k/\beta}$, where it equals the r.h.s.\ of (\ref{ptbound}).

\section{Proof of (\ref{case1})}\label{pf1}
\begin{proof}[\unskip\nopunct]
We first rule out very small $p$, showing that when 
\begin{equation*}
    p<n^{\frac{-1}{\gamma+1}},
\end{equation*}
\begin{equation}\label{Delta7}
   \text{w.l.p. } \Delta<np^{1-\gamma}, 
\end{equation}
so that (\ref{case1}) is vacuously true. 
For (\ref{Delta7}), with $K= (1/2)p^{-\gamma}$ (and $x$ any vertex), Lemma \ref{basic} (and the union bound) give
\begin{align}
    \pr(\Delta \ge np^{1-\gamma})&\le ln\cdot \pr(d(x)\ge 2Knp) \nonumber \\
    &<ln \cdot \exp[-2Knp\ln(K/e)] \nonumber \\
    &=ln \cdot \exp[-np^{1-\gamma}(\gamma t-\ln(2e))] \label{deltaeq1}.
\end{align}
But for $p<n^{\frac{-1}{\gamma+1}}$ (which is the same as $np^{1-\gamma}>n^2p^2$), the r.h.s.\ of (\ref{deltaeq1}) is $\exp[-\Omega_{\delta,l}(n^2p^2t)]$ (note that (\ref{pupper}) implies $\gamma t\ge 4$ and the initial $ln$ disappears because (\ref{plower}) makes $\gamma n^2p^2t$ a large multiple of $\log n$).
Therefore for the remainder of the proof of (\ref{case1}) we may assume that
\begin{equation}\label{highp7}
p\ge n^{\frac{-1}{\gamma +1}}.
\end{equation}

We say $v$ has \emph{large degree} if $\hat{d}(v)> np^{1-\gamma/2}$ and \emph{intermediate degree} if $np^{1-\gamma/2}\ge\hat{d}(v)>2np$. 
We classify the cycles appearing in (\ref{case1}) according to the positions of their large and intermediate vertices. For disjoint $M,N\subset [l]$, say $v_i$ is of \emph{type} $(M,N)$ if 
\[
\hat{d}(v_i)\left\{\begin{array}{ll}
> np^{1-\gamma/2} &\mbox{if $i \in M$,}\\ 
\in (2np,np^{1-\gamma/2}]&\mbox{if $i \in N$,}\\
\leq 2np&\mbox{otherwise,}
\end{array}\right.
\]
and say a set of vertices is of type $(M,N)$ if each of its members is.
We consider various possibilities for $(M,N)$, always requiring that all vertices under discussion are of the given type. 
To begin note that since we are in (\ref{case1}) we have $M \neq \emptyset$.

A little preview may be helpful. In each case we are trying to show that the size of the set of cycles $(v_1,\ldots, v_l)$ in question is small relative to $m^lp^l$, so would like the number of possibilities for $v_i$ to be, in geometric average, somewhat less than $mp$. For example, for $i \in M$ we do much \emph{better} than this using Lemma \ref{degsize}, which, recall, bounds the number of $v_i$'s of such large degree by $mp^{1+\gamma/2}$ (or $\epsilon mp^{1+\gamma/2}$ but here the $\epsilon$ is minor). On the other hand, for $i \notin M \cup N$ we have only the naive bound $m$, which is clearly unaffordable. To control the number of such $v_i$ we rely on first selecting some $v_{i-1}$ (or $v_{i+1}$) and then bounding the number of choices for $v_i$ by $\hat{d}(v_{i-1})$ (or $\hat{d}(v_{i+1})$). If $i-1,i \notin M \cup N$ then given $v_{i-1}$ we simply use $\hat{d}(v_{i-1})\le 2mp$ as a bound on the number of choices for $v_i$. However if, for example, $i-1\in M \cup N$ and $i \notin M \cup N$ we require Lemma \ref{degsum} to bound the choices for $(v_{i-1}, v_i)$ (with $v_{i-1}\sim v_i$).

\medskip

We now consider cycles of type $(M, \emptyset)$.
Here the absence of intermediate vertices will allow us to relax our assumption that there is at least one vertex of degree at least $np^{1-\gamma}$; we will only need to assume that there is at least one vertex of degree at least $np^{1-\gamma/2}$. Let 
\[
M^* =\{i \in M: i+1 \notin M\},
\]
with subscripts interpreted$\mod l$.
Note that $M \neq \emptyset$ implies $M^*= \emptyset$ only when $M = [l]$. Here and in the future we will tend to somewhat abusively omit ``w.l.p.''\ in situations where this is clearly what is meant.
We will bound:
\begin{enumerate}[label=(\roman*)]
    \item for $i\in M\sm M^*$, the number of possibilities for $v_i$; \label{step1}
    \item for $i\in M^*$, the number of possibilities for $(v_i,v_{i+1})$; \label{step2}
    \item given the choices in \ref{step2}, the number of possibilities for vertices of the cycle 
not chosen in \ref{step1} and \ref{step2}. \label{step3}
\end{enumerate}
Note that the number of vertices chosen in \ref{step3} is $l-|M|-|M^*|$.
The reason for treating $i\in M^*$ in \ref{step2} rather than \ref{step1} is (roughly) that it is through these vertices that we control the number of choices for the vertices that follow them (the $v_{i+1}$'s of \ref{step2}). 
For \ref{step1} we just recall that Lemma \ref{degsize} bounds the number of choices for $v_i$ (of large degree) by $\epsilon np^{1+\gamma/2}$; so the total number of possibilities in \ref{step1} is at most
\begin{equation*}
(\epsilon np^{1+\gamma/2})^{|M|-|M^*|}.
\end{equation*} 

For $i$ as in \ref{step2}, the number of possibilities for $(v_i,v_{i+1})$ is at most
\[
\sum\{ \hat{d}(v_i):\hat{d}(v_i)> np^{1-\gamma/2}\}<^* \epsilon n^2p^2,
\]
with the inequality given by Lemma \ref{degsum}. Thus the total number of possibilities in \ref{step2} is at most
\begin{equation*}
    \left(\epsilon n^2p^2\right)^{|M^*|}.
\end{equation*}

Finally, we may choose the $v_i$'s in \ref{step3} in an order for which each $v_{i-1}$ is chosen before $v_i$ (either because $v_{i-1}$ is chosen in \ref{step2}, or because $i-1 $ precedes $i$ in our order; e.g.\ we can use any cyclic order that begins with an $i$ for which $i-1\in M^*$ --- if $M^*=\emptyset$ then $M = [l]$, so all vertices were chosen in \ref{step1}).
But since $N=\emptyset$, the number of choices for $v_i$ given $v_{i-1}$ is at most $2np$. 

Combining the above bounds we find that, for a given $M$, the number of cycles of type $(M,\emptyset)$ is at most
$$\left(\epsilon n^2p^2\right)^{|M^*|} (\epsilon np^{1+\gamma/2})^{|M|-|M^*|}(2np)^{l-|M|-|M^*|} < \epsilon 2^ln^lp^l<\frac{\delta}{ 2^{l+2}}n^lp^l,$$
(using (\ref{epsilon}) for the last inequality). So, since there are fewer than $2^l$ possibilities for $M$,
\begin{equation}\label{Nempty}
\text{the number of cycles of any type }(M,\emptyset)\text{ is at most }\frac{\delta}{4}n^lp^2.
\end{equation}

Next we consider cycles of type $(M,N)$ with $N \neq \emptyset$. We may assume (at the cost of a negligible factor of $l$ in our eventual bound) that $1 \in N$, and that $k$ is an index for which $\hat{d}(v_k)>np^{1-\gamma}$ (which exists since we are in (\ref{case1}); again, we will pay a factor of $l-1$ for the choice of $k$.) We further define 
\begin{align*}
    N_1&= (N\cup M) \cap \{2, \ldots, k-1\},\\
    N_2&=(N\cup M) \cap \{k+1, \ldots, l\},\\
    N_1^* &=\{i \in N_1\setminus \{k-1\}: i+1 \notin N_1\}, \text{ and }\\
    N_2^* &= \{i \in N_2 \setminus \{k+1\}: i-1 \notin N_2\}.
\end{align*}
We split into cases based on whether $2 \in N_1\cup\{k\}$ and/or $l \in N_2\cup \{k\}$. 
First assume $2 \notin N_1\cup \{k\}$ and $l \notin N_2\cup \{k\}$.
We will bound:
\begin{enumerate}[label=(\roman*)]
    \item the number of possibilities for $v_k$;\label{2step1}
    \item the number of possibilities for $(v_2,v_1,v_l)$;\label{2step2}
    \item for $i \in (N_1 \cup N_2)\setminus (N_1^* \cup N_2^*)$ the number of possibilities for $v_i$; \label{2step3}
    \item for $i\in N_1^*$, the number of possibilities for $(v_i,v_{i+1})$; \label{2step4}
    \item for $i\in N_2^*$, the number of possibilities for $(v_i,v_{i-1})$; \label{2step5}
    \item given the choices in \ref{2step2}, \ref{2step4}, and \ref{2step5}, the number of possibilities for vertices of the cycle 
not chosen in \ref{2step1}-\ref{2step5}. \label{2step6}
\end{enumerate}

For \ref{2step1} we just recall that Lemma \ref{degsize} bounds the number of choices for $v_k$ by 
\begin{equation*}\epsilon np^{1+\gamma}.
\end{equation*}

For \ref{2step2} the number of possibilities for $(v_2,v_1,v_l)$ is bounded by
\begin{align*}
    \sum\left\{\hat{d}(v_1)^2: np^{1-\gamma/2}\ge \hat{d}(v_1)>2np\right\}&\le \left(np^{1-\gamma/2}\right)\sum \{\hat{d}(v_1): \hat{d}(v_1)>2np\}\\
    &<^* \epsilon^2 n^3p^{3-\gamma/2}t,
\end{align*}
where the second inequality is given by Lemma \ref{degsum}.

For \ref{2step3}, Lemma \ref{degsize} bounds the number of choices for $v_i$ (of intermediate or large degree) by $\epsilon npt$; so the number of possibilities in \ref{2step3} is at most
\begin{equation*}
(\epsilon npt)^{|N_1|+|N_2|-|N_1^*|-|N_2^*|}.
\end{equation*} 

For $i$ as in \ref{2step4}, the number of possibilities for $(v_i,v_{i+1})$ is at most
\[
\sum\{ \hat{d}(v_i):\hat{d}(v_i)> 2np\}<^* \epsilon^2 n^2p^2t,
\]
with the inequality given by Lemma \ref{degsum}. Thus the number of possibilities in \ref{2step4} is at most
\begin{equation*}
    \left(\epsilon^2 n^2p^2t\right)^{|N_1^*|}.
\end{equation*}
Similarly, the total number of possibilities in \ref{2step5} is at most
\begin{equation*}
    \left(\epsilon^2 n^2p^2t\right)^{|N_2^*|}.
\end{equation*}

Finally, for \ref{2step6} we choose the remaining $v_i$'s with $i<k$ in increasing order (of their indices) and those with $i>k$ in decreasing order. In the first case, when we come to $v_i$ the number of possibilities is at most $\hat{d}(v_{i-1})\le 2np$ (since $v_{i-1} \notin N_1$), and similarly in the second case this number is at most $\hat{d}(v_{i+1})\le 2np$ since $v_{i+1}\notin N_2$.
Thus, the number of possibilities in \ref{2step6} is at most 
\begin{equation*}
    (2np)^{l-|N_1|-|N_2|-|N_1^*|-|N_2^*|-4}.
\end{equation*}
Combining the above bounds we find that, for a given $M$ and $N$, the number of cycles of type $(M,N)$ is at most
\begin{equation*}
    \epsilon^3n^lp^{l+\gamma/2}t^l2^l<\epsilon^3n^lp^l(10l^3)^l<\frac{\delta n^lp^l}{4l^23^{l}},
 \end{equation*}
 where the second inequality uses (\ref{ptbound}).
 
Now we assume $2 \in N_1 \cup \{k\}$, but $l \notin N_2 \cup \{k\}$. In this case \ref{2step1}, \ref{2step3}, \ref{2step4}, and \ref{2step5} and their respective bounds all remain the same. However, now we replace \ref{2step2} with
\begin{enumerate}[label=(\roman*$'$)]
\setcounter{enumi}{1}
    \item the number of possibilities for $(v_1,v_l)$.\label{3step2}
\end{enumerate}
This is because $v_2$ will be selected in either \ref{2step1}, \ref{2step3}, or \ref{2step4}.
Our new \ref{3step2} is bounded by
\begin{equation*}
    \sum\left\{\hat{d}(v_1):\hat{d}(v_1)>2np\right\}<^*\epsilon^2n^2p^2t,
\end{equation*}
where the inequality comes from Lemma \ref{degsum}.
Additionally, in \ref{2step6} there are now $l-|N_1|-|N_2|-|N_1^*|-|N_2^*|-3$ vertices left to choose. Thus our bound for \ref{2step6} becomes
\begin{equation*}
    (2np)^{l-|N_1|-|N_2|-|N_1^*|-|N_2^*|-3}.
\end{equation*}
Combining these bounds with our previous bounds for \ref{2step1} and \ref{2step3}-\ref{2step5} we find that, for a given $M$ and $N$, the number of cycles of type $(M,N)$ is at most
\begin{equation*}
     \epsilon^3n^lp^{l+\gamma}t^l2^l<\epsilon^3n^lp^l(4l^3)^l<\frac{\delta n^lp^l}{4l^23^{l}},
\end{equation*}
where the second bound is again given by (\ref{ptbound}).

The argument for $2 \notin N_1 \cup \{k\}$, $l\in N_2 \cup \{k\}$ is essentially identical to the preceding one, so we will not discuss it further. 

It remains to consider the case when we have both $2 \in N_1 \cup \{k\}$ and $l \in N_2\cup \{k\}$. Again, there is no change in \ref{2step1} and \ref{2step3}-\ref{2step5} and we replace \ref{2step2}, in this case, by 
\begin{enumerate}[label=(\roman*$''$)]
\setcounter{enumi}{1}
    \item the number of possibilities for $v_1$\label{5step2}
\end{enumerate}
(since $v_2$ and $v_l$ will be among the vertices chosen in \ref{2step1} and \ref{2step3}-\ref{2step5}). By Lemma \ref{degsize} the number of possibilities here (i.e.\ for $v_1$) is at most
\begin{equation*}
    \epsilon^2npt.
\end{equation*}
Additionally, in \ref{2step6} we are now selecting $l-|N_1|-|N_2|-|N_1^*|-|N_2^*|-2$ vertices; so, our bound becomes
\begin{equation*}
    (2np)^{l-|N_1|-|N_2|-|N_1^*|-|N_2^*|-2}.
\end{equation*}
Again, combining bounds, we find that the number of cycles of type $(M,N)$ is at most
\begin{equation*}
     \epsilon^3n^lp^{l+\gamma}t^l2^l<\epsilon^3n^lp^l(4l^3)^l<\frac{\delta n^lp^l}{4l^23^{l}}.
\end{equation*}

So to recap, we have shown that, for any given $M$, $N \neq \emptyset$ (where we assume $\hat{d}(v_k)>np^{1-\gamma}$ and $np^{1-\gamma/2}\ge \hat{d}(v_1)>2np$) there are at most 
\begin{equation*}
    \frac{\delta n^lp^l}{4l^23^{l}}
\end{equation*}
cycles of type $(M,N)$. 

Since there are fewer than $3^l$ choices for $(M,N)$ and the assumptions on $1$ and $k$ only cost a factor of $l^2$, there are at most
\begin{equation*}
    \frac{\delta n^lp^l}{4}
\end{equation*}
cycles of all types $(M,N)$ with $N \neq \emptyset$; recalling (see (\ref{Nempty})) that we showed the same bound for the number of cycles of types $(M, \emptyset)$ (with $M \neq \emptyset$), we have the desired bound, $(\delta/2) n^lp^l$, on the l.h.s.\ of (\ref{case1}).

\end{proof}

\section{Proof of (\ref{pathtotal})}\label{pf3}
\begin{proof}[\unskip\nopunct]
For the rest of our discussion we may ignore \emph{bad} vertices, meaning those of degree at least $np^{1-\gamma}$, since cycles involving such vertices are excluded from (\ref{pathtotal}). (Recall we are calling $\hat{d}(v)$ the degree of $v$.)

What's really going on here is as follows. We think of choosing $\nabla(V_1, V_l)$ after all other edges have been specified. The number of cycles (again, avoiding bad vertices) is then
\begin{equation}\label{case3total}
\sum_{v_1 \sim v_l}f(v_1, v_l)
\end{equation}
(recall $f(v_1, v_l)$ is the number of full paths with endpoints $v_1$ and $v_l$ in which there are no bad vertices). Given $G \setminus \nabla(V_1, V_l)$, this is a weighted sum of independent binomials with expectation 
\begin{equation}\label{case3exp}
    p\sum_{v_1, v_l}f(v_1, v_l),
\end{equation}
to which we may hope to apply the large deviation bound in Lemma \ref{sum}. In this section we give a good (w.l.p.) bound on the sum in (\ref{case3exp}) (namely (\ref{pathtotal})). Once we have this, the only difficulty is that some of the ``weights'' $f(v_1, v_l)$ may be too large to support finishing \emph{via} the lemma. We will handle this difficulty in Section \ref{pf2}. 

\medskip

To prove (\ref{pathtotal}) we first consider full paths $(v_1, \ldots, v_l)$ in which each of $v_1, \ldots$ $,v_{l-1}$ has degree at most $(1+\epsilon)np$. There are at most 
\begin{equation}\label{bigpath}
    (1+\epsilon)^ln^lp^{l-1}<(1+\delta/16)n^lp^{l-1}
\end{equation}
such paths.

Now all the paths $(v_1, \ldots, v_l)$ left to consider must have some $v_i$ (where $i \in [l-1]$) such that $\hat{d}(v_i)> (1+\epsilon)np$. 
To count the number of such paths we split the argument based on $p$. First assume 
\begin{equation}\label{pupper3}
    p> \frac{\ln^2n}{n}.
\end{equation}
(This is not a tight bound for either argument, but it is a convenient cut-off.) Given (\ref{pupper3}) we know 
\begin{equation*} 
    q_K \le \exp\left[\frac{-\epsilon^2Knp}{16}\right]< \exp\left[\frac{-\epsilon^2\ln^2n}{16}\right]<n^{-2} 
\end{equation*}
for all $K\ge 1+\epsilon$ (see (\ref{qk}) for the definition of $q_K$), so in applications of Lemma \ref{degsize} we are always using the second value of $r_K$ (namely, $r_K=\frac{\epsilon^2 npt}{K\ln K}$). Additionally since $p>\frac{\ln^2n}{n}$ Lemma \ref{degsum} applies.
As in Section \ref{pf1} we classify paths according to the positions of vertices with $\hat{d}(v_i)>(1+\epsilon)np$. For $M \subseteq [l-1]$, say $v_i$ is of type $M$ if 
\begin{equation*}
   \hat{d}(v_i)
    \begin{cases}
    >(1+\epsilon)np,  &\mbox{if $i \in M$,}\\ 
    \le (1+\epsilon)np  &\mbox{otherwise,}\\ 
    \end{cases}
\end{equation*}
and say a set of vertices is of type $M$ if each of its members is either of type $M$ or in $V_l$. Note we have already shown that there are at most
\begin{equation*}
    (1+\delta/16)n^lp^{l-1}
\end{equation*} full paths of type $\emptyset$, so we now assume $M \neq \emptyset$. Let $m$ be the smallest element of $M$ and let 
\begin{equation*}
    M^*= \{i \in M : i+1 \notin M \}.
\end{equation*}
We will bound:
\begin{enumerate}[label=(\roman*)]
    \item for $i \in M \setminus M^*$, the number of possibilities for $v_i$;\label{case3step1}
    \item for $i \in M^*$, the number of possibilities for $(v_i, v_{i+1})$;\label{case3step2}
    \item given the choices in \ref{case3step2}, the number of possibilities for vertices of the path not chosen in \ref{case3step1} and \ref{case3step2}. \label{case3step3}
\end{enumerate}
For i as in \ref{case3step1} we recall that by Lemma \ref{degsize} the number of $v_i$'s of degree at least $(1+\epsilon)np$ is at most $\epsilon npt$. 
So, the total number of possibilities in \ref{case3step1} is at most
\begin{equation}\label{case3step1bound}
    (\epsilon npt)^{|M|-|M^*|}.
\end{equation}
For $i$ as in \ref{case3step2}, the number of possibilities for $(v_i,v_{i+1})$ is at most
\begin{equation*}
\sum\left\{\hat{d}(v_i):\hat{d}(v_i)> (1+\epsilon)np\right\}<^* \epsilon^2 n^2p^2t,
\end{equation*}
with the inequality given by Lemma \ref{degsum}. Thus the total number of possibilities in \ref{case3step2} is at most
\begin{equation}\label{case3step2bound}
    (\epsilon^2 n^2p^2t)^{|M^*|}.
\end{equation}
Finally for \ref{case3step3} we choose the remaining $v_i$'s with $i >m$ in increasing order (of the indices). When we come to $v_i$ we know $i-1 \notin M$, so given $v_{i-1}$ there are at most $(1+\epsilon)np$ choices for $v_i$. If $m=1$ then we have selected all the vertices in the path. If not, then we next select $v_{m-1}$. Since we are ignoring vertices of degree at least $np^{1-\gamma}$ we know that given $v_m$ there are at most $np^{1-\gamma}$ ways to select $v_{m-1}$. If $m=2$ then we are done, and if not then we select the $v_i$'s with $i<m-1$ in decreasing order (of the indices). Since $i+1 \notin M$, given $v_{i+1}$ there are at most $(1+\epsilon)np$ choices for $v_i$. 
Thus, the number of possibilities in \ref{case3step3} is at most
\begin{equation}\label{case3step3bound}
    \begin{cases}
    ((1+\epsilon)np)^{l-|M|-|M^*|-1}(np^{1-\gamma})  &\mbox{if $m>1$,}\\ 
    ((1+\epsilon)np)^{l-|M|-|M^*|}  &\mbox{if $m=1$.}
    \end{cases}
\end{equation}
Combining (\ref{case3step1bound}), (\ref{case3step2bound}), and the appropriate bound from (\ref{case3step3bound}) we find that, for a given $M$, there are at most
\begin{equation*}
\epsilon (1+\epsilon)^ln^lp^{l-\gamma}t^l<\epsilon (2l)^ln^lp^{l-1}<\frac{\delta n^lp^{l-1}}{2^{l+3}}
\end{equation*}
full paths of type $M$ (where the first inequality uses (\ref{ptbound})). Since there are less than $2^{l-1}$ possibilities for $M \neq \emptyset$ there are at most
\begin{equation*}
    \frac{\delta n^lp^{l-1}}{16}
\end{equation*}
full paths of type other than $\emptyset$. Together with our earlier bound on the number of full paths of type $\emptyset$ this bounds the total number of full paths (without vertices of degree at least $np^{1-\gamma}$) by
\begin{equation*}
    \left(1+\delta/8\right) n^lp^{l-1},
\end{equation*}
as desired. 

When 
\begin{equation}\label{plower3}
p\le \frac{\ln^2 n}{n}
\end{equation}
we first note that we have a better bound on $\Delta$ (the maximum degree) than $np^{1-\gamma}$. For (\ref{plower3}) Lemma \ref{basic} with $K = (\ln^3n)/2$ (and $x$ any vertex) gives
\begin{align*}
    \pr(\Delta>\ln^3n (np))&\le ln \pr(d(x)>\ln^3n(np)) \\
    &<ln\exp[-np(\ln^3 n)(\ln \ln n)]\\
    &<\exp[-\Omega_{\delta,l}(n^2p^2t)],
\end{align*}
using $npt<\ln^3n$ and absorbing the initial $ln$ into the exponent (since (\ref{plower}) gives $np(\ln^3n)>\epsilon^{-2}(\ln^3n)$).
Thus, $\Delta <^* \ln^3n(np)\le\ln^5 n$. 

Given $p$, let $K$ be minimal with $q_K \le n^{-2}$. We first bound the number of cycles containing at least one $v$ with $\hat{d}(v)>Knp$. Lemma \ref{degsize} says there are at most $\frac{l\epsilon^2npt}{K \ln K}$ such vertices (in all of $V$).
Once such a vertex $v$ has been specified there are at most
\begin{equation*}
    \Delta^{l-1}<^*\ln^{5(l-1)}n
\end{equation*}
ways to select the remaining vertices in a full path containing $v$. So, w.l.p.\ we have at most
\begin{equation}\label{largeipathbound}
    \frac{l\epsilon^2npt\ln^{5(l-1)}n}{K \ln K}=o(n^lp^{l-1})
\end{equation}
full paths containing at least one $v$ as above. (The quite weak $o(n^lp^{l-1})$ follows from the lower and upper bounds on $p$ in (\ref{plower}) and (\ref{plower3}), respectively.)

Now we count paths in which every vertex has degree at most $ Knp$ and at least one vertex has degree at least $(1+\epsilon)np$ (recalling that we have already treated those violating either condition).
Say $v$ is of type $i$ if 
\begin{equation*}
    (1+\epsilon)2^inp< \hat{d}(v)\le(1+\epsilon)2^{i+1}np,
\end{equation*}
and let $U_i= \{\text{vertices of type }i\}$. We say the type of a path $P$ is the largest $i$ for which $P$ contains a vertex of type $i$.
Lemma \ref{degsize} gives
\begin{equation*}
    |U_i|<^*6l\epsilon2^{-il} n.
\end{equation*}
Note we have already bounded the number of full paths of type $i$ where $i>\log_2K-1$.
For smaller $i$ we think of specifying a path $P$ of type $i$ by choosing
\begin{enumerate}[label=(\roman*)]
    \item some $v$ of type $i$, and then \label{case3lowerp1}
    \item the remaining vertices of the path. \label{case3lowerp2}
\end{enumerate}
Here the bounds are easy: the number of possibilities in \ref{case3lowerp1} is at most
\begin{equation}
    |U_i|<^*6l\epsilon2^{-il} n,
\end{equation}
and the number of possibilities in \ref{case3lowerp2} is at most
\begin{equation*}
    ((1+\epsilon)2^{i+1}np)^{l-1},
\end{equation*}
since, given the choice in \ref{case3lowerp1}, we may order the remaining choices so that each new vertex is drawn from the at most $(1+\epsilon)2^{i+1}np$ neighbors of some vertex chosen earlier. Thus the number of full paths of type $i$ is bounded by
\begin{equation*}
    6l\epsilon(1+\epsilon)^{l-1}2^{l-i-1}n^lp^{l-1}<\epsilon l 2^{2l-i}n^lp^{l-1}.
\end{equation*}
Summing over $i$ we find that w.l.p.\ there are at most
\begin{equation}\label{smallipathbound}
    \sum_{i=0}^{\log_2K-1}\epsilon l 2^{2l-i}n^lp^{l-1}<\frac{\delta}{17}n^lp^{l-1}
\end{equation}
full paths of all types up to $\log_2K-1$ (where the inequality follows easily from our choice of $\epsilon$ --- see (\ref{epsilon})).
Adding (\ref{smallipathbound}) to the numbers of full paths with all degrees at most $(1+\epsilon)np$ and those of type $i$ for $i>\log_2K-1$ ((\ref{bigpath}) and (\ref{largeipathbound})) we find that w.l.p.\ there are at most
\begin{equation*}
    (1+\delta/8)n^lp^{l-1}
\end{equation*}
full paths (with all vertices of degree at most $np^{1-\gamma}$).
So, regardless of $p$, we have 
\begin{equation*}
    \sum f(v_1,v_l)<^*(1+\delta/8)n^lp^{l-1},
\end{equation*}
as desired.

\end{proof}


\section{Proof of (\ref{case2})}\label{pf2}
\begin{proof}[\unskip\nopunct]
As explained at the start of Section \ref{pf3} we want to use (\ref{pathtotal}) and finish via Lemma \ref{sum}, but some $f(v_1,v_l)$'s may be too large to support this. To handle this difficulty we introduce the notion of a ``heavy path'' below. We then set 
\begin{equation*}
\C'= \{(v_1, \ldots, v_l) \in \C: (\forall i)v_i \in V'_i \text{ and }   (v_1,\ldots,v_l) \text{ is not heavy}\},
\end{equation*}
and show
\begin{equation} \label{case2a}
         \pr(|\C'|>(1+\delta/4)n^lp^l)< \exp[- \Omega_{\delta,l}(n^2p^2s)]\text{, and}
\end{equation} 
\begin{equation} \label{case2b}
    \text{w.l.p. } |\{(v_1, \ldots, v_l) \in \C: \forall i (v_i \in V'_i ), (v_1,\ldots,v_l) \text{ heavy}\}|< (\delta/4) n^lp^l.
\end{equation}

It will turn out that we need different definitions of ``heavy path'', depending on $p$. Either of these will say that the number of non-heavy paths, say $g(v_1,v_l)$, joining any $v_l,v_l$ satisfies
\begin{equation}\label{nonheavynumber}
    g(v_1,v_l)\le \frac{4^ln^{l-2}p^{l-2}}{s}.
\end{equation}
(Recall $s= \min\{t,n^{l-2}p^{l-2}\}$.)
We will return to the definitions of heavy path and the proof of (\ref{case2b}) in Subsections \ref{case2bhigh} and \ref{case2blow}; here we assume (\ref{nonheavynumber}) and give the easy proof of (\ref{case2a}). 

As suggested above this is a straightforward application of Lemma \ref{sum}. Let $V_1 = \{x_1, \ldots, x_n\}$ and $V_l= \{y_1, \ldots, y_n\}$. Then with 
\begin{equation*}
    w_{i,j}= g(x_i,y_j)\le \frac{4^ln^{l-2}p^{l-2}}{s} \equalscolon z
\end{equation*}
and $\zeta_{i,j}$ the indicator of the event $\{x_iy_j \in \Hb\}$ we have 
\begin{equation*}
    |\C'|= \zeta \coloneqq \sum \zeta_{i_j}w_{i,j}.
\end{equation*}
In addition, recalling (\ref{pathtotal}), we have
\begin{equation*}
    \E \zeta = p \sum w_{i,j} \le p \sum f(v_1, v_l)<^* (1+\delta/8)n^lp^l.
\end{equation*}
Hence Lemma \ref{sum} with $\lambda = (\delta/8) n^lp^l$ gives $$\pr(|\C'|> (1+\delta/4)n^lp^l)< \exp[- \Omega_{\delta,l}(n^2p^2s)],$$
as desired. 


\subsection{Proof of (\ref{case2b}) when $p> n^{\frac{-5l}{5l+1}}$}\label{case2bhigh}
For $p> n^{\frac{-5l}{5l+1}}$ we say $(v_1, v_l)$ is \emph{heavy} if 
\begin{equation*}
f(v_1,v_l)> \frac{4^ln^{l-2}p^{l-2}}{s},
\end{equation*} 
and $(v_1,\ldots, v_l)$ is a \emph{heavy} path if $(v_1,v_l)$ is heavy.
(Note that here we have $s=t(=\ln (1/p))$.)
So, in this case the notion of heavy depends only on the endpoints of the path.
Note that this definition trivially implies (\ref{nonheavynumber}).

A brief indication of why we need two definitions of a heavy path may be helpful. In the present case (i.e.\ $p>n^{\frac{-5l}{5l+1}}$) we bound the number of cycles $(v_1, \ldots, v_l)$ for which $(v_1, \ldots, v_l)$ is a heavy path by first bounding the number of $v_1$'s (and similarly $v_l$'s) that are in heavy paths. To do this we show that for $v_1$ to be in a heavy path there must be some $v_3$ for which $d(v_1,v_3)(\coloneqq |N(v_1)\cap N(v_3)|)$ is ``large'', and we use this necessary condition to bound the number of $v_1$'s in heavy paths.

\medskip

Let 
\begin{align*}
V^*_1&= \{v_1 \in V'_1 : \exists v_l \in V_l \text{ with } (v_1,v_l) \text{ heavy}\},\\
V^*_l&= \{v_l \in V'_l : \exists v_1 \in V_1 \text{ with } (v_1,v_l) \text{ heavy}\}.
\end{align*}
Thus every cycle, $(v_1, \ldots, v_l)$, considered in this section must have $v_1 \in V_1^*$ and $v_l \in V_l^*$. We first bound $|V_1^*|$ and $|V_l^*|$, and then use this to bound $|\nabla(V_1^*, V_l^*)|$. A necessary condition for $v_1 \in V_1^*$ is
\begin{equation}\label{starcond}
    \text{ there exists }v_3 \text{ such that }d(v_1,v_3)\ge np^{1+\gamma(l-1)}.
\end{equation}
To see this, fix $v_1$ and recall that $\hat{d}(v)<np^{1-\gamma}$ for every vertex under discussion in (\ref{case2}). Thus, we know that for any $v_l$ there are at most $(np^{1-\gamma})^{l-3}$ paths $(v_l,\ldots, v_3)$. To pick $v_2$ to complete such a path with $v_1$ we require $v_2 \in N(v_1) \cap N(v_3)$. Thus if $d(v_1,v_3)<np^{1+\gamma(l-1)}$ for all $v_3$ then for any $v_l$,
\begin{equation*}
    f(v_1,v_l) < n^{l-2}p^{l-2+2\gamma}< \frac{5l^2n^{l-2}p^{l-2}}{2et} <4^ln^{l-2}p^{l-2}/s.
\end{equation*}
(Here the middle inequality comes from (\ref{ptbound}) with $\beta=2\gamma$ and $k=1$.)
So in order to bound $|V_1^*|$ it suffices to bound the number of $v_1$'s satisfying (\ref{starcond}).

Since $\hat{d}(v_3)< np^{1-\gamma}$, Lemma \ref{basic} (with $m= np^{1-\gamma}$, $\alpha=p$, and $K=p^{-1+\gamma l}$) gives
\begin{align*}
    \pr(v_1 \in V_1^*)&\le n\pr(B(m,p)>Kmp) \\ 
    &<n\exp\left[np^{1+\gamma(l-1)}(1-(1-\gamma l)t)\right].
\end{align*}
Note that $p\le e^{-4/\gamma}$ (see (\ref{pupper})) implies $t \ge 4/\gamma$, so 
\begin{equation*}
 \exp[np^{1+\gamma(l-1)}(1-(1-\gamma l)t)]<\exp[-np^{1+\gamma(l-1)}t/2].   
\end{equation*}
Thus,
\begin{align*}
    \pr(v_1 \in V_1^*)&<n\exp[-np^{1+\gamma(l-1)}t/2]\\&< \exp[-np^{1+\gamma(l-1)}t/3].
\end{align*} The initial $n$ disappears since $p> n^{\frac{-5l}{5l+1}}$ implies $np^{1+\gamma(l-1)}>n^{1/(5l^2+l)}$.

Next we show that w.l.p.\ $|V_1^*|$ and $|V_l^*|$ are at most $\epsilon np^{1-\gamma(l-1)}$. The lemma will be stated in more generality as we will use it again after (\ref{vistarprob}).
\begin{lemma}\label{V*bound}
If $c \in [1,3]$ and $U$ is a random subset of $V_i$ in which each $v_i$ is included independently with probability at most $\exp[-np^{1+\gamma(l-c)}t/3]$ then $|U|<^* \epsilon np^{1-\gamma(l-c)}$.
\end{lemma}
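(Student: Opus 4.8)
**Proof proposal for Lemma \ref{V*bound}.**

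The plan is to bound $\P(|U| \geq \epsilon np^{1-\gamma(l-c)})$ directly by a union bound over subsets of $V_i$ of the target size, using the stated per-vertex inclusion probability, and then check that the resulting bound beats the w.l.p.\ threshold $\exp[-T\epsilon^4 n^2p^2 t]$. Write $r = \epsilon np^{1-\gamma(l-c)}$ for the target size and $\pi \le \exp[-np^{1+\gamma(l-c)}t/3]$ for the per-vertex probability. Since membership of distinct $v_i$ in $U$ is independent, $|U|$ is stochastically dominated by $B(n,\pi)$, so by Lemma \ref{basic} (or just the crude first-moment/union bound) we get
\[
\P(|U| \ge r) \le \binom{n}{r}\pi^{r} \le \left(\frac{en}{r}\right)^{r}\pi^{r} = \left(\frac{e n \pi}{r}\right)^{r}.
\]
Now substitute $\pi \le \exp[-np^{1+\gamma(l-c)}t/3]$ and $r = \epsilon np^{1-\gamma(l-c)}$: the base is $\frac{e n}{\epsilon n p^{1-\gamma(l-c)}}\exp[-np^{1+\gamma(l-c)}t/3] = \frac{e}{\epsilon} p^{-1+\gamma(l-c)}\exp[-np^{1+\gamma(l-c)}t/3]$.

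The heart of the matter is to see that the exponential factor dominates the polynomial-in-$p^{-1}$ prefactor, i.e.\ that $n p^{1+\gamma(l-c)} t/3$ is comfortably larger than $(1-\gamma(l-c))t + \ln(e/\epsilon)$, so that the base is at most, say, $1/2$ (in fact far smaller). This follows from the assumed lower bound on $p$: since $p > \epsilon^{-4}n^{-1}$ (see (\ref{plower})) and $c \le 3$ while $\gamma = \frac{1}{5l^2}$ is tiny, $np^{1+\gamma(l-c)} = (np)\cdot p^{\gamma(l-c)}$ is a large power of $n$ (one checks $p^{\gamma(l-c)} \ge p^{\gamma l} \ge n^{-\gamma l} = n^{-1/(5l)}$, so $np^{1+\gamma(l-c)} \gg n^{1/2}$ say, using also $p \le e^{-20l^2}$ from (\ref{pupper}) to handle the constant factors). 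Hence $np^{1+\gamma(l-c)}t/3 \gg t + \ln(1/\epsilon)$, so the base above is at most $e^{-np^{1+\gamma(l-c)}t/6}$, say. Therefore
\[
\P(|U| \ge r) \le \exp\!\left[-\tfrac{1}{6}\,r\, n p^{1+\gamma(l-c)} t\right] = \exp\!\left[-\tfrac{\epsilon}{6}\, n^2 p^{2} t\right],
\]
which is of the required w.l.p.\ form $\exp[-\Omega(\epsilon^4 n^2 p^2 t)]$ (the extra powers of $\epsilon$ are no issue, since $\epsilon < 1$).

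The only real obstacle is bookkeeping: making sure the lower bound (\ref{plower}) and upper bound (\ref{pupper}) on $p$, together with $c \in [1,3]$ and the value of $\gamma$, genuinely force $np^{1+\gamma(l-c)}$ to be a large enough power of $n$ that all the $t$'s and $\ln(1/\epsilon)$'s from the binomial coefficient are swallowed. This is the same kind of estimate used for (\ref{qkbound}) and in the proof of Lemma \ref{degsize}, so it should go through without surprises; if one wants to be careful one can instead invoke Lemma \ref{basic} with $K = r/(n\pi)$ (which is $\gg 4$) to get the bound $(e/K)^{r}$ directly, avoiding the binomial coefficient entirely. Either route gives the claimed $|U| <^* \epsilon np^{1-\gamma(l-c)}$.
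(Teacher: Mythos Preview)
Your approach is essentially the same as the paper's: both use the binomial tail bound (Lemma~\ref{basic}, equivalently the union bound $\binom{n}{r}\pi^r$) and arrive at $\exp[-\epsilon n^2p^2t/6]$. The paper phrases it as applying Lemma~\ref{basic} with $m=n$, $\alpha=\pi$, $K=r/(n\pi)$ and then checking $K/e>\pi^{-1/2}$, which is exactly your alternative suggestion at the end.

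However, the bookkeeping step you flag as ``the only real obstacle'' has a genuine error. You invoke only (\ref{plower}), i.e.\ $p>\epsilon^{-4}n^{-1}$, and conclude $np^{1+\gamma(l-c)}\gg n^{1/2}$. This is false: for $p$ near $n^{-1}$ one has $np^{1+\gamma(l-c)}=(np)\cdot p^{\gamma(l-c)}=O(n^{-\gamma(l-c)})\to 0$ whenever $c<l$, so your base $\frac{e}{\epsilon}p^{-1+\gamma(l-c)}\exp[-np^{1+\gamma(l-c)}t/3]$ blows up and the bound is vacuous. The missing input is the ambient hypothesis of the subsection in which the lemma sits, namely $p\ge n^{-5l/(5l+1)}$. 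With that assumption one computes
\[
np^{1+\gamma(l-c)}>n^{\,1-\frac{5l(1+\gamma(l-c))}{5l+1}}=n^{\,c/(l(5l+1))},
\]
a (small but) positive power of $n$, which suffices to make the base at most $\pi^{1/2}$ and hence gives exactly $\exp[-\epsilon n^2p^2t/6]$. Note this is still nowhere near your claimed $n^{1/2}$; fortunately only a growing function of $n$ is needed. The paper's proof invokes precisely this: ``since $p\ge n^{-5l/(5l+1)}$ we know, say, $K/e>\alpha^{-1/2}$.''
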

\begin{subproof}
Here we apply Lemma \ref{basic} with $m=n$, $\alpha = \exp[-np^{1+\gamma(l-c)}/3]$ and $K=\epsilon p^{1-\gamma(l-c)}\alpha^{-1}$. Note that since $p\ge n^{\frac{-5l}{5l+1}}$ we know, say, $K/e>\alpha^{-1/2}$; so Lemma \ref{basic} gives
\begin{align*}
    \pr\left(|U|>np^{1-\gamma(l-c)}\right)&<(e/K)^{\epsilon np^{1-\gamma(l-c)}}\\
    &<\alpha^{\epsilon np^{1-\gamma (l-c)}/2}\\
    &=\exp[-\epsilon n^2p^2t/6].
\end{align*}
\end{subproof}

\noindent Hence $|V_1^*|,|V_l^*|<^*\epsilon np^{1-\gamma(l-1)}$.

We next show that for any $i$
\begin{align}\label{ABbound}
 &\text{w.l.p.\ }|\nabla(A,B)|< \epsilon^2 n^2p^2 \\ &\forall A \subseteq V_i, B \subseteq V_{i+1} \text{ with }|A|,|B|< \epsilon np^{1-\gamma(l-1)}. \nonumber
\end{align}

We use (\ref{ABbound}) to bound $|\nabla(V_1^*,V_l^*)|$ (and again after (\ref{vistarprob})). To prove (\ref{ABbound}) we assume $A$ and $B$ are of the appropriate sizes and apply Lemma \ref{basic} with $m=|A||B|$, $\alpha=p$, and $K=\epsilon^2n^2p^2(mp)^{-1}$. Note that $m<\epsilon^2 n^2p^{2-2\gamma(l-1)}$, and, generously, $K \ge p^{-1+[2/(5l)]}>p^{-1/2}$. Also, since $p\le e^{-20l^2}$, we have $\ln(K)>t/2 \ge 10l^2$. So for a given $A$ and $B$ of the appropriate size Lemma \ref{basic} gives
\begin{align*}
    \P(|\nabla(A,B)|> \epsilon^2 n^2p^2)&<\exp[-\epsilon^2 n^2p^2(\ln(K)-1)]\\
    &<\exp[-\epsilon^2 n^2p^2t/4].
\end{align*}
Simply taking the union bound with the first sum over all possible $A, B$ and the next two over all $a,b<\epsilon np^{1-\gamma(l-1)}$ we have

\begin{align}
&\sum_{A,B}\P(|\nabla(A,B)|> \epsilon^2 n^2p^2)<\nonumber \\
&\sum_{a,b} l{n \choose a}{n \choose b}\exp[-\epsilon^2 n^2p^2t/4]< \nonumber \\
&\sum_{a,b}l\exp[ a \ln(en/a)+b\ln(en/b)-\epsilon^2 n^2p^2t/4].\label{nabla}
\end{align}
It is easy to see (using $p>n^{\frac{-5l}{5l+1}}$ and $\gamma = \frac{1}{5l^2}$) that for $a,b <\epsilon np^{1-\gamma(l-1)}$ we have 
\begin{equation*}
    n^2p^2t \gg \max\{a \ln(en/a)+b\ln(en/b),\ln(n)\}.
\end{equation*}
So (\ref{nabla}) is, for example, at most $\exp[-\epsilon^2 n^2p^2t/5]$. 
Therefore w.l.p.\
\begin{equation}\label{ABnabla}
|\nabla(A,B)|< \epsilon^2 n^2p^2, \text{ for all } A,B \text{ with } |A|,|B|<np^{1-\gamma(l-1)},
\end{equation} 
as desired. 
Specifically we have (w.l.p.)
\begin{equation}\label{V1Vlstarnabla}
    |\nabla(V_1^*, V_l^*)|<\epsilon^2 n^2p^2.
\end{equation}

We next want to bound the number of full paths between $V_1^*$ and $V_l^*$. For $i \in \{2, \ldots, l-1\}$ let
\begin{equation*}
    V_i^*= \{v_i:\max_{v \in V_{i-2}\cup V_{i+2}}d(v,v_i) > np^{1+\gamma (l-3)}\}.
\end{equation*}
We first bound the number of full paths such that at least one vertex $v_i$ in the path is not in the appropriate $V^*_i$. Fixing  $v_1$, $v_l$, and an index $i<l-1$ we bound the number of full paths $(v_1, \ldots, v_l)$ with $v_i \notin V_i^*$. Since $\hat{d}(v)<np^{1-\gamma}$ for all $v$ under consideration, there are at most 
\begin{equation*}
    n^{i-1}p^{(1-\gamma)(i-1)}
\end{equation*}
ways to choose $v_2\sim  \cdots\sim v_i$ with $v_2 \sim v_1$ and 
\begin{equation*}
n^{l-i-2}p^{(1-\gamma)(l-i-2)}
\end{equation*} ways to choose $v_{l-1}\sim \cdots\sim v_{i+2}$ with $v_{l-1}\sim v_l$. To complete the path we must have $v_{i+1} \in N(v_i) \cap N(v_{i+2})$. Since we assume $v_i \notin V^*_i$, there are at most $np^{1+\gamma(l-3)}$ choices for $v_{i+1}$. Thus there are at most 
\begin{equation*}
(n^{i-1}p^{(1-\gamma)(i-1)})(n^{l-i-2}p^{(1-\gamma)(l-i-2)})np^{1+\gamma(l-3)}= n^{l-2}p^{l-2}
\end{equation*} paths from $v_1$ to $v_l$ with $v_i \notin V^*_i$. 

If $i=l-1$ then we instead bound the number of choices for $v_{l-1}$ by 
\begin{equation*}
    \hat{d}(v_l)<np^{1-\gamma},
\end{equation*}
and the number of ways to choose $v_2 \sim \cdots \sim v_{l-3}$ with $v_2 \sim v_1$ by
\begin{equation*}
    n^{l-4}p^{(l-4)(1-\gamma)}.
\end{equation*}
To complete the path we must have $v_{l-2}\in N(v_{l-3})\cap N(v_{l-1})$. Again, as we are assuming $v_{l-1}\notin V^*_{l-1}$, there are at most $np^{1+\gamma(l-3)}$ choices for $v_{l-2}$. So, there are at most 
\begin{equation*}
(np^{1-\gamma})(n^{l-4}p^{(1-\gamma)(l-4)})(np^{1+\gamma(l-3)})= n^{l-2}p^{l-2}
\end{equation*} paths from $v_1$ to $v_l$ with $v_{l-1} \notin V^*_{l-1}$.

Now summing over $i$, there are at most $(l-2)n^{l-2}p^{l-2}$ paths using at least one vertex outside of $\bigcup_{i=2}^{l-2} V^*_i$, and combining this with (\ref{V1Vlstarnabla}) bounds the number of cycles as in (\ref{case2b}) (with some vertex outside of $\bigcup_{i=2}^{l-2} V^*_i$) by
\begin{equation}\label{nonstarbound}
  (l-2)\epsilon n^{l}p^{l} < \frac{\delta}{8} n^lp^l.
\end{equation}

The only cycles left to count are those with $v_i \in V_i^*$ for all $i$. We first bound $|V_i^*|$. Lemma \ref{basic} with $m=np^{1-\gamma}$, $\alpha=p$, and $K=p^{-1+\gamma(l-2)}$ (and the union bound) gives, for any $v \in V_i$,
\begin{align}
    \pr(v \in V_i^*)&< 2n\pr(B(m,p)>Kmp)\nonumber\\
    &<2n\exp[np^{1+\gamma(l-3)}(1-(1-\gamma (l-2))t)].\label{vistarbound}
\end{align}
As before, $ t \ge 4/\gamma$ implies the r.h.s.\ of (\ref{vistarbound}) is at most
\begin{equation*}
 2n\exp[-np^{1+\gamma(l-3)}t/2].   
\end{equation*}
Hence, 
\begin{align}\label{vistarprob}
    \pr(v_i \in V_i^*)&<2n\exp[- np^{1+\gamma(l-3)}t/2] \nonumber \\&< \exp[-np^{1+\gamma(l-3)}t/3].
\end{align} Again the initial $2n$ disappears since $p> n^{\frac{-5l}{5l+1}}$ implies $np^{1+\gamma(l-3)}>n^{3/(5l^2+l)}$.
Given (\ref{vistarprob}) Lemma \ref{V*bound} gives $|V_i^*|<^*\epsilon np^{1-\gamma(l-3)}$. Assuming this, (\ref{ABnabla}) gives
\begin{equation*}|\nabla(V_i^*, V_{i+1}^*)|< \epsilon n^2p^2
\end{equation*} for all $i$. 

To finish the proof (for $p\ge n^{\frac{-5l}{5l+1}}$) we use the following lemma due to Shearer \cite{S}. We will use this lemma again when $p \le n^{\frac{-5l}{5l+1}}$. To state it we require the following definition. (Recall a hypergraph on $V$ is simply a collection --- possibly with repeats --- of subsets of $V$.)

For a hypergraph $\F$ on the vertex set $V$ and $H \subseteq V$, the \emph{trace} of $\F$ on $V$ is defined to be
\begin{equation*}
  \Tr(\F, H)= \{F \cap H: F \in \F\}.
\end{equation*}

\begin{lemma}\label{shearer}
Suppose $\F$ is a hypergraph on $V$ and $\HH$ is another hypergraph on $V$ such that every vertex in $V$ belongs to at least $d$ edges of $\HH$. Then 
\begin{equation}\label{shearereq}
    |\F|\le \prod_{H \in \HH} |\Tr(\F, H)|^{1/d}.
\end{equation}
\end{lemma}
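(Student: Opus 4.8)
The plan is to deduce this from the \emph{entropy} version of Shearer's lemma, which is the standard route. First I would set up the information-theoretic framework: for a discrete random variable $Z$ write $H(Z)$ for its Shannon entropy, and recall the two facts that drive everything: the chain rule $H(Z_1,\ldots,Z_k)=\sum_j H(Z_j\mid Z_1,\ldots,Z_{j-1})$, and the monotonicity ``conditioning does not increase entropy,'' i.e.\ $H(Z\mid Y)\le H(Z\mid Y')$ whenever $Y'$ is a function of $Y$ (in particular whenever one conditions on a larger sub-collection of coordinates). One also uses that a random variable supported on a set of size $N$ has entropy at most $\log_2 N$.

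The core step is the entropy form of the lemma: if $X=(X_v)_{v\in V}$ is any random vector indexed by $V$ and $\HH$ is a hypergraph in which every vertex lies in at least $d$ edges, then $d\cdot H(X)\le \sum_{H\in\HH}H(X_H)$, where $X_H=(X_v)_{v\in H}$. To prove this I would fix an arbitrary linear order on $V$; for each $H\in\HH$ expand $H(X_H)$ by the chain rule along this order and bound each term $H(X_v\mid X_u:u\in H,\,u<v)$ below by $H(X_v\mid X_u:u\in V,\,u<v)$ using the monotonicity above. Summing over $H\in\HH$ and interchanging the order of summation, the right-hand side becomes $\sum_{v\in V}|\{H\in\HH:v\in H\}|\cdot H(X_v\mid X_u:u<v)\ge d\sum_{v\in V}H(X_v\mid X_u:u<v)=d\cdot H(X)$, the last equality again by the chain rule.

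To finish I would specialize: take $X$ uniformly distributed on $\F$, viewing each $F\in\F$ as its indicator vector in $\{0,1\}^V$, so that $H(X)=\log_2|\F|$. For each $H\in\HH$, the projection $X_H$ takes values in $\Tr(\F,H)$, hence $H(X_H)\le\log_2|\Tr(\F,H)|$. Substituting these bounds into the entropy inequality gives $d\log_2|\F|\le\sum_{H\in\HH}\log_2|\Tr(\F,H)|$; dividing by $d$ and exponentiating yields (\ref{shearereq}).

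I do not expect a genuine obstacle here, as this is a classical result (and could alternatively be proved by a direct combinatorial induction on $|V|$). The one place to stay careful is the bookkeeping of inequality directions in the entropy step --- both the chain-rule lower bound for each $H(X_H)$ and the correct use of the covering number $d$ --- together with the reduction to the uniform distribution on $\F$, which is exactly where the combinatorial quantities $|\F|$ and $|\Tr(\F,H)|$ enter.
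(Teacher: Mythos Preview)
Your entropy argument is correct and is the standard modern proof of Shearer's lemma; the direction of the conditioning bound and the final specialization to the uniform distribution on $\F$ are both handled properly. Note, however, that the paper does not actually prove this lemma: it simply states it as a result ``due to Shearer \cite{S}'' and applies it, so there is no in-paper proof to compare against.
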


To apply Lemma \ref{shearer} here, let $\F$ be the hypergraph on $V=V(\Hb)$ whose edges are the vertex sets of cycles using only vertices in $\bigcup_{i=1}^l V^*_i$. So $|\F|$ is the number of cycles using only vertices in $\bigcup_{i=1}^l V_i^* $. Let $\HH$ be the hypergraph on $V$ with edges $\{H_i \coloneqq V_i \cup V_{i+1}\}_{i\in [l]} $. Thus each vertex belongs to exactly two edges of $\HH$. Furthermore 
\begin{equation*}
|\Tr(\F, H_i)|\le |\nabla(V_i^*,V_{i+1}^*)|<\epsilon n^2p^2. 
\end{equation*}
Thus Lemma \ref{shearer} gives
\begin{equation*}
    |\F|\le \prod_{H \in \HH}|\Tr(\F,H)|^{1/2}<(\epsilon n^2p^2)^{l/2}<(\delta/8) n^lp^l.
\end{equation*}
Combining this with (\ref{nonstarbound}) gives (\ref{case2b}) (for $p>n^{\frac{-5l}{5l+1}}$).

\subsection{Proof of (\ref{case2b}) when $p\le n^{\frac{-5l}{5l+1}}$}\label{case2blow}
For $p\le n^{\frac{-5l}{5l+1}}$ we need the following definitions for $j \notin \{1,l\}$ and $i<l-1$
\begin{align}
    N^j(v_l)= &\{v_j : \text{ there exists a path } (v_j,v_{j+1},\ldots, v_l)\} \label{lowpheavy} \\
    V''_i = &\{v_i \in V_i : \max_{v_l}d_{N^{i+1}(v_l)}(v_i) >4\}. \label{lowpheavy2}
\end{align}
That is, $v_i \in V_i''$ if, for some $v_l$, $v_i$ has at least 5 neighbors in $V_{i+1}$ that are  ``directly reachable'' from $v_l$.
We say a path $(v_1, \ldots, v_l)$ is \emph{heavy} if $v_i \in V''_i$ for some $i(<l-1)$. Note (as promised) we still have (\ref{nonheavynumber}), since 
\begin{equation*}
g(v_1,v_l)\le 4^{l-2}<\frac{4^l n^{l-2}p^{l-2}}{s}.
\end{equation*}
(Again recall $s= \min\{t, n^{l-2}p^{l-2}\}$.)

In this section we are bounding the number of cycles $(v_1, \ldots,v_l)$ containing at least one vertex in some $V''_i$. To do this we fix $i$ and bound the number of cycles with $v_i \in V''_i$. 

We first observe that
\begin{equation}\label{deltabound2}
    \Delta<^*n^2p^2t
\end{equation}
(where, as usual, $\Delta$ is the maximim degree in $\Hb$.)
For (\ref{deltabound2}) Lemma \ref{basic} with $K=npt/2$ (and $x$ any vertex), together with the union bound, gives
\begin{align*}
    \pr(\Delta>n^2p^2t)&\le ln \pr(d(x)>2Knp)\\
    &<ln\exp[-2Knp(1-\ln(K))]\\
    &<\exp[-n^2p^2t].
\end{align*}
So we may assume $\Delta<n^2p^2t$, whence, for any $j$ and $v_l$,
\begin{equation}\label{Njbound}
|N^j(v_l)| \le \Delta^{l-2}<n^{2l-2}p^{2l-2}t^{l-1}\equalscolon m.
\end{equation}
Note that $m \le n^{\frac{2l-2}{5l+1}}\log^{l-1} n $ (since $p\le n^{\frac{-5l}{5l+1}}$).

We next show
\begin{equation}\label{V''bound}
    |V''_i|<^* \epsilon n^2p^2.
\end{equation}
Here, for a given $v_l$, we may think of $N^{i+1}(v_l)$ --- which does not depend on edges involving $V_i$ --- as given. Then for a given $v_i$ we have (using (\ref{Njbound}))
\begin{equation}\label{V''prob}
    \pr(v_i \in V''_i) <n\pr(B(m,p)>4);
\end{equation}
so applying Lemma \ref{basic} with $\alpha =p$ and $K = 4m^{-1}p^{-1}>n^{3/5}$ bounds the r.h.s.\ of (\ref{V''prob}) by 
\begin{equation*}
    n(e/K)^4<e^4n^{-7/5}\equalscolon q.
\end{equation*}

Another application of Lemma \ref{basic}, with $m=n$, $\alpha =p$, and $K = \epsilon np^2q^{-1}> n^{2/5}$ now gives (\ref{V''bound}):
\begin{equation*}
    \pr(|V''_i|> \epsilon n^2p^2)< (e/K)^{\epsilon n^2p^2}<\exp[-(\epsilon/5) n^2p^2t].
\end{equation*}
We may thus assume from now on that $|V_i''|<\epsilon n^2p^2$.

Given $V''_i$ we bound the number of cycles $(v_1, \ldots, v_i, \ldots, v_l)$ with $v_i \in V''_i$. This requires the following definitions (for $i \neq j$):
\begin{align*}
V_{i,j}^0 &= \{v_j: \text{there is a path } (v_i, v_{i+1}, \ldots, v_j) \text{ with }v_i \in V''_i \},\\
V_{i,j}^1 &= \{v_j: \text{there is a path } (v_i, v_{i-1}, \ldots, v_j) \text{ with }v_i \in V''_i\},\\
V_{i,j}&= V_{i,j}^0 \cap V_{i,j}^1.
\end{align*}
(Note we are reading subscripts$\Mod l$.)

Thus $v_i \in V_{i,j}$ if and only if some cycle containing $v_j$ meets $V_i''$. We also set
\begin{equation*}
    V_{i,i}=V_{i,i}^0 = V_{i,i}^1=V''_i.
\end{equation*} 

To bound the number of cycles involving some $v_i \in V''_i$ we need a bound on $|\nabla(V_{i,j}, V_{i,{j+1}})|$, but will actually bound the (larger) quantity
\begin{equation*}
    |\nabla(V^0_{i,j}, V^1_{i,{j+1}})|.
\end{equation*}

As elsewhere the point here is to retain some independence; \emph{given} $V_i''$, $V_{i,j}^0$ and $V_{i,j+1}^1$ do not depend on $\nabla(V^0_{i,j}, V^1_{i,{j+1}})$. Thus, having specified $V_i''$ we may think of first exposing the edges of $\Hb$ not involving $\nabla(V^0_{i,j}, V^1_{i,{j+1}})$ --- thus determining $V_{i,j}^0$ and $V_{i,j+1}^1$ --- at which point $\nabla(V^0_{i,j}, V^1_{i,{j+1}})$ is just a binomial to which we may apply Lemma \ref{basic}. Note, however, that $\nabla(V^0_{i,j}, V^1_{i,{j+1}})$ will not be independent of the choice of $V_i''$, so we will need to take a union bound over possibilities for $V_i''$.

We will show 
\begin{equation}\label{nablabound}
|\nabla(V_{i,j}^0,V_{i,j+1}^1)|<^* \left(\frac{\delta}{4l}\right)^{2/l}n^2p^2.
\end{equation}
The eventual punchline here will be an application of Lemma \ref{shearer} (Shearer's Lemma) similar to the one in Section \ref{case2bhigh}. This is the reason for the $\left(\frac{\delta}{4l}\right)^{2/l}$ which, in applying the lemma will be raised to the power $l/2$.

Note that for all $i, j$ we have (very crudely in most cases)
\begin{equation*}
    |V_{i,j}^0|,|V_{i,j}^1|\le |V''_i|\Delta^{l-1}< \epsilon n^2p^2\Delta^{l-1}.
\end{equation*}
We apply Lemma \ref{basic} with
\begin{align*}
    m&= |V_{i,j}^0||V_{i,j+1}^1| < \epsilon^2n^4p^4\Delta^{2l-2},\\
    \alpha&= p, \text{ and}\\
    K&=(mp)^{-1} \left(\frac{\delta}{4l}\right)^{2/l}n^2p^2.
\end{align*}
A little checking (using $p<n^{\frac{-5l}{5l+1}}$) confirms that, for example,
\begin{equation*}
K>n^{1/6l}.
\end{equation*}
Thus for specified $i, V_i'',$ and $j$ Lemma \ref{basic} gives
\begin{equation}\label{72nablaprob}
\Pr\left(|\nabla(V_{i,j}^0,V_{i,j+1}^1)|> \left(\frac{\delta}{4l}\right)^{2/l}n^2p^2\right)<
\exp\left[- \frac{(\delta/(4l))^{2/l} n^2p^2t}{6l}\right],
\end{equation}
and summing over possibilities for $i, V_i'',$ and $j$ (recalling that we have $|V_i''|<\epsilon n^2p^2$) gives (\ref{nablabound}):
\begin{align*}
    \Pr&\left(\exists i,j \text{ with }|\nabla(V_{i,j}^0,V_{i,j+1}^1)|> \left(\frac{\delta}{4l}\right)^{2/l}n^2p^2\right)\\
    &<l^2 \sum_{w<\epsilon n^2p^2}{n \choose w}\exp\left[- \frac{(\delta/(4l))^{2/l} n^2p^2t}{6l}\right]\\
    &= \exp[-\Omega(n^2p^2t)].
\end{align*}
Here for the final bound we use that $w\ln(en/w)< \epsilon n^2p^2t$ and $\epsilon$ is small enough (see (\ref{epsilon})).

To apply Lemma \ref{shearer} here let $\F$ be the hypergraph on $V=V(\Hb)$ where each edge is the vertex set of a cycle using only vertices in $\bigcup_{j=1}^l V_{i,j}$. Again let $\HH$ be the hypergraph on $V$ with edges $\{H_j \coloneqq V_j \cup V_{j+1}\}_{j \in [l]}$. Thus each vertex belongs to exactly two edges of $\HH$.
Furthermore, (\ref{nablabound}) says
\begin{equation*}
|\Tr(\F, H_j)|\le |\nabla(V_{i,j},V_{i,j+1})|\le |\nabla(V_{i,j}^0,V_{i,j+1}^1)|<^*\left(\frac{\delta}{4l}\right)^{2/l}n^2p^2. 
\end{equation*}
Thus Lemma \ref{shearer} gives
\begin{equation*}
    |\F|\le \prod_{H \in \HH}|\Tr(\F,H)|^{1/2}<^*\left(\left(\frac{\delta}{4l}\right)^{2/l}n^2p^2\right)^{l/2}<\left(\frac{\delta}{4l}\right) n^lp^l,
\end{equation*}
as desired.
So, summing over choices for $i$, there are less than $(\delta/4) n^lp^l$ cycles using some $v_i \in V''_i$, as desired. 
\end{proof}

\printbibliography
\end{document}